\documentclass[12pt,twoside,a4paper,reqno]{article}
\usepackage{amsmath,amsthm,amssymb,graphicx,mathrsfs,amsmath,cite,array,epsfig,graphicx,cases,subfig,url,color,setspace}
\usepackage{mathrsfs, array,tikz,skak,chessfss,relsize,bbding,wasysym}
\usepackage{epsfig,graphicx,cases,subfig,url,fullpage}
\usepackage{epsfig, graphicx,mathrsfs, epstopdf, algorithm,algorithmic,boxedminipage,caption,overpic,float,placeins}
\newfloat{algorithm}{t}{lop}
\captionsetup{margin=30pt,font=footnotesize,labelfont=bf}

\newtheorem{thm}{Theorem}[section]
\newtheorem{cor}[thm]{Corollary}

\newtheorem{fact}[thm]{Fact}
\newtheorem{lemma}[thm]{Lemma}

\newtheorem{prop}[thm]{Proposition}
\newtheorem{remark}[thm]{Remark}
\newtheorem{defi}[thm]{Definition}
\newcommand\ER{{Erd\H{o}s-R\'{e}nyi }}

\raggedbottom 
\usetikzlibrary{shapes.geometric}
\definecolor{light-gray}{gray}{0.5}

\usepackage{xypic}
\def\e{\mathbb{E}\, }
\def\R{\mathbb{R}}

%
%
%
%
%
%
%

\def\ks{k_S}
\def\kt{k_T}

\def\X{\mathbb{X}}

\def\P{\mathbb{P}}

\def\PG{\P_{\mathcal{G}}}
\def\PM{\P_{\mathcal{M}}}

\def\G{{\mathcal{G}(n,\ks,\kt)}}
\def\M{{\mathcal{M}(n,\ks,\kt)}}

\def\OmegaM{\Omega_\mathcal{M}}
\def\OmegaG{\Omega_\mathcal{G}}

\newcommand{\ignore}[1]{ }

\begin{document}

\title{An Independent Process Approximation to Sparse Random Graphs with a Prescribed Number of Edges and Triangles}
\author{Stephen DeSalvo and M. Puck Rombach \\}
\maketitle

\begin{abstract}
We prove a \emph{pre-asymptotic} bound on the total variation distance between the uniform distribution over two types of undirected graphs with $n$ nodes.  
One distribution places a prescribed number of $\kt$ triangles and $\ks$ edges not involved in a triangle independently and uniformly over all possibilities, and the other is the uniform distribution over simple graphs with exactly $\kt$ triangles and $\ks$ edges not involved in a triangle.  
As a corollary, for $k_S = o(n)$ and $k_T = o(n)$ as $n$ tends to infinity, the total variation distance tends to $0$,
at a rate that is given explicitly.  
Our main tool is Chen--Stein Poisson approximation, hence our bounds are explicit for all finite values of the parameters.  

\smallskip
\noindent \textbf{Keywords.} Poisson approximation, Random Graph Theory, Stein's method, Asymptotic Enumeration of Graphs.

\smallskip
\noindent \textbf{MSC classes:} 05C30, 05A16, 05A20, 60C05

\end{abstract}




Many real-world networks display a property called \emph{clustering} or \emph{transitivity}, a dependency in the edge probability between two nodes on the number of common neighbours. In a human social network, for example, if two individuals have one or more friends in common, they are more likely themselves to be friends. Other types of networks, such as transportation networks, might display a negative transitivity, meaning that the probability of two nodes sharing a link decreases with the number of common neighbours, because such links may be unnecessary. The concept of clustering in social networks was introduced in~\cite{watts1998collective}. Formally, the \emph{clustering coefficient} $C$ of a simple, undirected graph $G$ is defined as 
$$C(G)=\frac{\mbox{number of closed triplets of vertices in } G}{\mbox{number of connected triplets of vertices in } G},$$  
where a closed triplet is an ordered triplet of vertices $(i,j,k)$ such that the induced subgraph on $(i,j,k)$ is a triangle, and a connected triplet is an ordered triplet of vertices $(i,j,k)$ such that the induced subgraph on $(i,j,k)$ is connected. 

Many widely used models for real-world networks, such as random geometric graphs ~\cite{dall2002random}, random intersection graphs~\cite{bloznelis2013degree}, random key graphs~\cite{yagan2009random}, the small-world model~\cite{watts1998collective}, and preferential attachment model~\cite{barabasi1999emergence,bollobas2003mathematical}, naturally display some clustering, even though this is not an explicit parameter in their definitions. Regarding the latter, Bollob\'{a}s and Riordan show in~\cite{bollobas2003mathematical} that it is possible to achieve almost any clustering coefficient or number of triangles by varying the parameters of the preferential attachment model as proposed in~\cite{barabasi1999emergence}. Several similar models with explicit clustering parameter have also been proposed~\cite{klemm2002highly,holme2002growing,serrano2005tuning,bansal2008evolving}.

In~\cite{newman2009random}, Newman proposes a uniform random graph model with specified number of edges and triangles for each vertex. This is a natural extension of the so-called \emph{configuration model}~\cite{bollobas1980probabilistic}, in which each vertex is given a specified number of half-edges, which are then joined up uniformly at random. 
In this paper, we make an attempt to study models where we control both the number of edges and the number of triangles in a graph, and sample from all such possible structures. 

We will focus our attention on variations of the random graph model $\mathcal{G}(n,m)$. The more famous model $\mathcal{G}(n,p)$ is the most widely studied random graph model in probabilistic combinatorics. It was introduced by Gilbert~\cite{gilbert1959random} and developed by Erd\H{o}s and R\'{e}nyi to an extent that it is sometimes referred to as the \ER random graph model. Their work started with the introduction of a very similar model $\mathcal{G}(n,m)$~\cite{erdos1959random}. In the model $\mathcal{G}(n,p)$, there are $n$ vertices and every edge $(i,j)$ appears independently at random with probability $p=p(n)$. In  $\mathcal{G}(n,m)$, there are $n$ vertices and $m$ edges, and the set of vertex pairs that have edges is chosen uniformly at random from all $\binom{n(n-1)/2}{m}$ such sets. When $m=np$, these two models behave similarly in the limit of $n \to \infty$ for many properties of interest, and $\mathcal{G}(n,p)$ is used more commonly because the independence between the edges allows for easier analysis. In this present work, however, it turns out that $\mathcal{G}(n,m)$ is much easier to analyze. See remark \ref{rejects}.


Our main result is a total variation distance bound between the distributions of the sets of edges in two different graph models, both of which are a variation on $\mathcal{G}(n,m)$.  In the first model, we fix $\ks$ edges and $\kt$ triangles and place them \emph{independently} at random.  
The resulting graph assumes no interaction between the edges and triangles, so one can imagine single edges as blue and the edges of each triangle as red, and we only count a triangle if it consists of all red edges, and similarly we do not count the red edges of a triangle as single blue edges;  
we denote by $\mathcal{M}(n,\ks,\kt)$ the random graph model which assigns equal probability to all such graphs.  
In the other model, we consider the set of all \emph{simple} graphs on $n$ nodes with exactly $\ks$ edges not involved in a triangle and $\kt$ triangles; we denote by $\mathcal{G}(n,\ks,\kt)$ the random graph model which assigns equal probability to all such graphs.  
Theorem~\ref{dtv theorem} puts a maximum bound on the total variation distance between 
$\mathcal{M}(n,\ks,\kt)$ and 
$\mathcal{G}(n,\ks,\kt)$, 
which, asymptotically as the number of nodes $n$ tends to infinity, tends to zero for $\ks = o(n)$ and $\kt = o(n)$.

To compute the total variation distance bound, we apply the Poisson approximation approach in \cite{ArratiaGoldstein}.  Poisson approximation has a rich history, see \cite{barbour1992poisson} and the references therein.  In particular, the application of Stein's method to the Poisson distribution \cite{chen1975poisson}, known as Chen--Stein Poisson approximation, not only allows one to prove Poisson convergence for certain collections of dependent random variables, it also provides a pre-asymptotic bound, \emph{i.e.}, a bound which is explicit and absolute for all finite values of the parameters.

\ignore{
To apply Stein's method to the Poisson distribution, one typically starts with an index set $\Gamma$ and a collection of (possibly dependent!) indicator random variables $X_\alpha$, $\alpha \in \Gamma$ and defines the random variable $W = \sum_\alpha X_\alpha.$  Then, for each $\alpha \in \Gamma$, one constructs the random variable $V_\alpha$ on the same probability space as $W$ via a coupling, such that
\[ \mathcal{L}(V_\alpha) = \mathcal{L}(W-1 | X_\alpha=1). \]
Comparing the distribution of $W$ to the distribution of a Poisson random variable $P$ with $\e P = \e W$, Stein's method yields the total variation distance bound
\[ d_{TV}(\mathcal{L}(W), \mathcal{L}(P)) \leq \min(1,\lambda^{-1})\sum_{\alpha\in \Gamma}p_\alpha \e |W - V_\alpha|. \]


Our target distribution is the uniform distribution over the set $\mathcal{G}(n,\ks,\kt)$, and instead we consider the uniform distribution over the set $\mathcal{M}(n,\ks,\kt)$ as an approximation.  
We define the index set $\Gamma$ to be the set of events which are allowed in $\mathcal{M}(n,\ks,\kt)$ but not in $\mathcal{G}(n,\ks,\kt)$.  
For example, given three nodes $\{i, j, k\}$, 

For us, the index set $\Gamma$ is the set of events $\binom{n}{2}$ possible edges in a graph with $n$ nodes, with $X_\alpha$ the indicator that an edge is present.  When the probability space is that of a uniformly chosen graph in $\mathcal{G}(n,\ks, \kt)$, then the $X_\alpha$ are dependent indicator random variables.  
}

There are two dominant error terms in the approximation, which are the expected number of occurrences of clustering by three single edges to form an unintended triangle, and the expected number of occurrences of clustering by three triangles to form an extra, unintended triangle.
We also use Theorem~\ref{dtv theorem} to prove Proposition~\ref{counting Gnk}, which is a pre-asymptotic estimate for 
the normalizing constant of $\mathcal{G}(n,\ks,\kt).$




\def\X{\mathbb{X}}


\section{Random Graph Models}

Let  
\begin{enumerate}
\item $\mathcal{G}(n,m)$ denote graphs picked uniformly from the set of all simple graphs with exactly $n$ nodes and $m$ edges, with each of the $\binom{\binom{n}{2}}{m}$ graphs equally likely (the original Erd\H{o}s--R\'{e}nyi model~\cite{erdos1959random});
\item $\mathcal{M}(n,m)$ denote graphs picked uniformly from the set of all graphs with no self loops, but with multiple edges allowed, and with exactly $n$ nodes and $m$ edges, with each edge placed independently and uniformly at random from the $\binom{n}{2}$ possible edges, so that each of the $\frac{\binom{n}{2}^{m}}{m!}$ possible graphs are equally likely.  
\end{enumerate}


We now define the extensions of each of these sets in our context.  Rather than just fix the number of edges, we fix the number of edges not involved in a triangle and also fix the number of triangles.  Let
\begin{enumerate}
\item $\mathcal{G}(n,\ks,\kt)$ denote graphs picked uniformly from the set of all simple graphs with exactly $n$ nodes, exactly $\ks$ edges not part of any triangle, and exactly $\kt$ triangles;
\item $\mathcal{M}(n,\ks,\kt)$ denote graphs formed by placing $\ks$ edges independently and uniformly over all $\binom{n}{2}$ possible pairs of nodes, and placing $\kt$ triangles independently and uniformly over all $\binom{n}{3}$ possible triplets of nodes on a graph.  Again, multiple edges are allowed.
\end{enumerate}


Let $\OmegaM$ and $\PM$ denote the sample space and probability measure of $\M$, respectively, and similarly let $\OmegaG$ and $\PG$ denote the sample space and probability measure of $\G$, respectively.  
Since $\OmegaG \subset \OmegaM$ and our measures are uniform, 
we define the set $E$ to be such that 
\begin{equation}\label{dtv x12}
\PG = \P_{\mathcal{M}|E},
\end{equation}
i.e., the uniform distribution over elements in $\OmegaG$.  
Thus our interest is in the quality of approximation of $\P_{\mathcal{M}|E}$ using $\PM$.  

\ignore{
We hence define random variables $\X^1$ and $\X^2,$ such that
\[ \mathcal{L}(\mathbb{\X}^1) = \mathcal{L}(\mathbb{\X}), \]
\[ \mathcal{L}(\mathbb{\X}^2) = \mathcal{L}(\mathbb{\X}|E). \]
In other words,
\begin{equation}\label{dtv x12}
\mathcal{L}(\X^2) = \mathcal{L}( \X^1 | E),
\end{equation}
and our interest is in the quality of approximation of $\mathcal{L}(\X^2)$ by $\mathcal{L}(\X^1)$.  
}
\ignore{
The distribution $\mathcal{L}(\mathbb{X})$ depends on the random graph model, which we denote by $\mathcal{X}$, which is the uniform distribution over all elements in $\mathcal{X}$.  

Depending on the random graph model, we obtain different distributions and forms of dependence between the coordinates in the distribution $\mathbb{X}.$  For example, when $\mathcal{X} = \mathcal{G}(n,\ks, \kt)$, the random variables $X_{i,j}$ are \emph{dependent} Bernoulli random variables, whereas for $\mathcal{X} = \mathcal{M}(n,\ks,\kt)$, the random variables are independent and nonnegative integer--valued.  
}

We now describe the set $E$ in terms of random variables $X_1, \ldots, X_7,$ where $X_i : \OmegaM \to \mathbb{Z}$, $i=1,\ldots, 7$.
\begin{align*}
X_1 &:=  \#\{\text{triplets of single edges that form a triangle} \}, \\
X_2 &:=  \#\{\text{extra triangle by two single edges connecting to an edge of a triangle} \}, \\
X_3 &:=  \#\{\text{extra triangle by single edge connecting two nodes in two triangles}, \\
&  \qquad \ \  \text{touching in a third node} \}, \\
X_4 &:= \#\{\text{extra triangle formed by three intersecting triangles}\}, \\
X_5 &:= \#\{\text{double edge from a single edge on top of a triangle edge}\},\\
X_6 &:=  \#\{\text{double edges}\}, \\
X_7 &:=  \#\{\text{double triangles}\}.
\end{align*}
Examples are shown in Figure~\ref{bad events}.  
We have $E = \{ X_1 = X_2 = \ldots = X_7 = 0\}.$ 

A common measure of distance between two probability distributions is total variation distance.  For any $k>0$, given two $\mathbb{R}^k$--valued probability distributions $\mathcal{L}(X)$ and $\mathcal{L}(Y)$, the total variation distance between $\mathcal{L}(X)$ and $\mathcal{L}(Y)$ is denoted by $d_{TV}(\mathcal{L}(X), \mathcal{L}(Y))$, and it is defined as
\begin{equation}\label{dtv}
d_{TV}(\mathcal{L}(X), \mathcal{L}(Y)) = \sup_{A \subseteq \R^k} |P(X\in A) - P(Y\in A)|, 
\end{equation}
where $A$ is any Borel--measurable set.  

We now present our main theorem, which is a quantitative bound on the total variation distance between $\PM$ and $\PG$

\begin{thm}\label{dtv theorem}
For all $n \geq 3$, $k_S \geq 0$, $k_T \geq 0$, we have
\begin{equation}
(1-e^{-\lambda}) - d_0 \leq d_{TV}(\PG, \PM) \leq (1-e^{-\lambda}) + d_0,
\label{dtv bounds}
\end{equation}
where $\lambda = \sum_{i=1}^7 \lambda_i$ is given by Lemma~\ref{lambda} and $d_0$ is given by Lemma~\ref{W lemma}. 
\end{thm}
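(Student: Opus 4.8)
The plan is to realize the target distribution $\PG$ as the conditional distribution $\P_{\mathcal{M}\mid E}$, exactly as set up in~\eqref{dtv x12}, and then exploit the elementary identity that for any event $E$ with $\P_{\mathcal{M}}(E) > 0$,
\begin{equation}
d_{TV}(\P_{\mathcal{M}\mid E}, \P_{\mathcal{M}}) = 1 - \P_{\mathcal{M}}(E).
\label{dtv identity}
\end{equation}
This is the standard fact that conditioning on an event moves the distribution in total variation by precisely the complementary probability of that event (the worst set $A$ in~\eqref{dtv} is $E$ itself). So the entire problem reduces to estimating $\P_{\mathcal{M}}(E) = \P_{\mathcal{M}}(X_1 = \cdots = X_7 = 0)$, i.e., the probability that none of the seven ``bad events'' occurs in the independent model $\M$.

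Next I would bring in Chen--Stein Poisson approximation. Writing each $X_i$ as a sum of indicator random variables over the relevant index set (triples of single edges, configurations of two single edges plus a triangle edge, etc.), set $W = \sum_{i=1}^7 X_i$ and $\lambda = \e_{\mathcal{M}} W = \sum_{i=1}^7 \lambda_i$, where $\lambda_i = \e_{\mathcal{M}} X_i$ is computed in Lemma~\ref{lambda}. The Chen--Stein bound (in the form of~\cite{ArratiaGoldstein}) gives $d_{TV}(\mathcal{L}(W), \mathrm{Poisson}(\lambda)) \le d_0$, where $d_0$ is the coupling/dependency error quantified in Lemma~\ref{W lemma}. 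Since $E = \{W = 0\}$, we have $\P_{\mathcal{M}}(E) = \P(W = 0)$, and comparing to a Poisson$(\lambda)$ variable,
\begin{equation}
\bigl| \P_{\mathcal{M}}(E) - e^{-\lambda} \bigr| \le d_{TV}(\mathcal{L}(W), \mathrm{Poisson}(\lambda)) \le d_0.
\label{W zero bound}
\end{equation}
Combining~\eqref{dtv identity} and~\eqref{W zero bound} yields $| d_{TV}(\PG, \PM) - (1 - e^{-\lambda}) | \le d_0$, which is exactly~\eqref{dtv bounds}.

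One point requiring care is that~\eqref{dtv identity} is only valid when $\P_{\mathcal{M}}(E) > 0$; for the degenerate parameter ranges where $E$ is impossible in $\M$ (so that $\G$ is itself empty or the conditioning is vacuous), the stated bounds should still be checked to hold, or the hypotheses $n \ge 3$, $k_S, k_T \ge 0$ interpreted so that this is non-vacuous. I would also double-check that the seven events $X_1, \dots, X_7$ genuinely exhaust the ways an element of $\OmegaM$ can fail to lie in $\OmegaG$ — that is, that $\OmegaG = \OmegaM \cap \{W = 0\}$ — since the whole argument hinges on $E = \{W=0\}$ being the correct conditioning event; this is the combinatorial heart of the matter and is presumably justified by the case analysis accompanying Figure~\ref{bad events}.

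The main obstacle is not the high-level argument, which is short, but the verification underlying Lemmas~\ref{lambda} and~\ref{W lemma}: computing each $\lambda_i$ exactly in the multigraph model $\M$ (keeping track of ordered versus unordered configurations and the independence of the $k_S$ edge-placements and $k_T$ triangle-placements), and then bounding the Chen--Stein error $d_0$, which requires identifying, for each bad-event indicator, its dependency neighborhood and controlling the terms $b_1$ (self-overlap) and $b_2$ (neighbor-overlap) of the Chen--Stein machinery. Those estimates are where all the real work lies; the proof of Theorem~\ref{dtv theorem} itself is essentially the two-line combination above once those lemmas are in hand.
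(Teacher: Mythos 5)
Your proposal is correct and follows essentially the same route as the paper: reduce $d_{TV}(\PG,\PM)$ to $1-\P_{\mathcal{M}}(E)=\P(W>0)$ via the identity for conditional distributions with $A=E$ in Equation~\eqref{dtv}, then apply the Chen--Stein bound of \cite{ArratiaGoldstein} to get $|\P(W=0)-e^{-\lambda}|\leq d_0$ and combine. The only cosmetic difference is that the paper invokes the sharper Chen--Stein estimate $|\P(W=0)-e^{-\lambda}|\leq \min(1,\lambda^{-1})(b_1+b_2)$ directly rather than passing through the factor-$2$ total variation bound, and your flagged concerns (non-degeneracy of $E$ and exhaustiveness of the seven bad events) are exactly the points the paper leaves to the setup around Figure~\ref{bad events}.
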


Note that Equation~\eqref{dtv bounds} is \emph{a hard inequality}, \emph{i.e.}, not asymptotic.  Now we specify the asymptotic range of parameter values for which Equation~\eqref{dtv bounds} tends to 0.  

\begin{cor}
As $n$ tends to infinity, we have
\[ d_0 = \Theta\left( \frac{(\ks+\kt)^4}{n^6} \right), \]
\[ \lambda = \Theta\left(\frac{(\ks+\kt)^3+\kt^2}{n^3}+\frac{\kt\ks+\ks^2}{n^2}\right). \]
Whence,
\[\text{$\ks = o(n)$ and $\kt = o(n)$} \iff d_{TV}(\PG, \PM)\sim \lambda \to 0. \]
\end{cor}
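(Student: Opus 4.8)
The plan is to extract the leading-order behaviour of $\lambda$ and of $d_0$ from the explicit binomial-coefficient expressions furnished by Lemma~\ref{lambda} and Lemma~\ref{W lemma}, and then feed these estimates into the two-sided bound of Theorem~\ref{dtv theorem}. Each $\lambda_i$ is a ratio of products of binomial coefficients in $n,\ks,\kt$; substituting $\binom{m}{k}=\tfrac{m^k}{k!}(1+O(1/m))$ and $\binom{n}{2}\sim\tfrac{n^2}{2}$, $\binom{n}{3}\sim\tfrac{n^3}{6}$ and keeping the top-degree monomial, the four ``accidental triangle'' terms come out as $\lambda_1=\Theta(\ks^3/n^3)$, $\lambda_2=\Theta(\ks^2\kt/n^3)$, $\lambda_3=\Theta(\ks\kt^2/n^3)$, $\lambda_4=\Theta(\kt^3/n^3)$, whence $\lambda_1+\lambda_2+\lambda_3+\lambda_4=\Theta((\ks+\kt)^3/n^3)$ by $\sum_{j=0}^{3}\binom{3}{j}\ks^{3-j}\kt^{j}=(\ks+\kt)^3$, while the coincidence terms come out as $\lambda_5=\Theta(\ks\kt/n^2)$, $\lambda_6=\Theta(\ks^2/n^2)$, $\lambda_7=\Theta(\kt^2/n^3)$; summing gives the stated order of $\lambda$. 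The same kind of expansion applied to the Chen--Stein error expression of Lemma~\ref{W lemma} yields $d_0=\Theta((\ks+\kt)^4/n^6)$, the point being to identify the single dominant cross-term among the many pairwise-dependence contributions and to verify the remainder are of strictly smaller order; note also that the prefactor $\min(1,\lambda^{-1})$ equals $1$ throughout the range of interest, since $\lambda\to 0$ there.

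For the equivalence, suppose first $\ks=o(n)$ and $\kt=o(n)$ and set $u:=\ks+\kt=o(n)$. From the order just obtained, $\lambda=\Theta\bigl(\tfrac{u^3+\kt^2}{n^3}+\tfrac{\ks\kt+\ks^2}{n^2}\bigr)\to 0$, and since $\lambda=\Omega(u^3/n^3)$ we get
\[
\frac{d_0}{\lambda}=O\!\left(\frac{u^4/n^6}{u^3/n^3}\right)=O\!\left(\frac{u}{n^3}\right)\longrightarrow 0,
\]
so $d_0=o(\lambda)$. Dividing the inequality $(1-e^{-\lambda})-d_0\le d_{TV}(\PG,\PM)\le(1-e^{-\lambda})+d_0$ of Theorem~\ref{dtv theorem} by $\lambda$ and using $1-e^{-\lambda}=\lambda(1+O(\lambda))$ (valid as $\lambda\to 0$) gives $d_{TV}(\PG,\PM)/\lambda\to 1$, i.e.\ $d_{TV}(\PG,\PM)\sim\lambda$, and moreover $d_{TV}(\PG,\PM)\le\lambda+d_0\to 0$. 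Conversely, if it is not the case that $\ks=o(n)$ and $\kt=o(n)$, then $\limsup_n u/n>0$, so along a subsequence $u\ge cn$ and hence $\lambda\ge c'u^3/n^3\ge c'c^3>0$; thus $\lambda\not\to 0$ and the right-hand statement ``$d_{TV}(\PG,\PM)\sim\lambda\to 0$'' fails. (The case $\ks=\kt=0$ is trivial: $d_{TV}(\PG,\PM)=\lambda=0$ and both sides hold.)

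The logical skeleton of the equivalence is essentially a two-line consequence of Theorem~\ref{dtv theorem} once the two $\Theta$-estimates are available, and the estimate for $\lambda$ is a routine expansion. The main obstacle, and the step most likely to hide an error, is pinning down the exact order $\Theta((\ks+\kt)^4/n^6)$ of $d_0$: this requires expanding the full error bound of Lemma~\ref{W lemma}, a sum of many pairwise-dependence contributions (including those involving the double-edge and double-triangle indicators), isolating the dominant one, and confirming that nothing of larger order survives, all uniformly in $\ks,\kt$.
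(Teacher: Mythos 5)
Your route is the one the paper intends: the corollary carries no separate proof, and its ingredients are exactly the ones you assemble --- Lemma~\ref{lambda} for the orders of $\lambda_1,\dots,\lambda_7$, the two propositions in Section~\ref{proof} computing $b_1=\sum_{i\le j}C_{i,j}\,p_i\,p_j$ and $b_2=\sum_{i\le j}C_{i,j}^\ast$ for the order of $d_0=\min(1,\lambda^{-1})(b_1+b_2)$, and Theorem~\ref{dtv theorem} to transfer everything to $d_{TV}(\PG,\PM)$. Your treatment of the equivalence is correct and complete: $\lambda=\Omega((\ks+\kt)^3/n^3)$ gives $d_0/\lambda=O((\ks+\kt)/n^3)\to 0$, and dividing the sandwich of Theorem~\ref{dtv theorem} by $\lambda$ with $1-e^{-\lambda}=\lambda(1+O(\lambda))$ yields $d_{TV}(\PG,\PM)\sim\lambda$; the converse via a subsequence with $\ks+\kt\ge cn$ is also the intended argument.

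The only place you stop short is the claim $d_0=\Theta((\ks+\kt)^4/n^6)$, which you flag as the main obstacle but do not carry out. Two remarks on that. First, the upper-bound half is exactly what the paper's propositions supply ($b_1,b_2=O((\ks+\kt)^4/n^6)$, obtained by the term-by-term expansion you describe, the dominant contributions coming from the $\Gamma_1$--$\Gamma_1$ and $\Gamma_4$--$\Gamma_4$ overlaps); the paper never actually establishes a matching lower bound either, writing only ``$\sim O(\cdot)$'', so on this point your proposal is no less complete than the source. Second, and more importantly, only the upper bound on $d_0$ is load-bearing: your ratio computation needs $d_0=O((\ks+\kt)^4/n^6)$ and $\lambda=\Omega((\ks+\kt)^3/n^3)$, not the reverse inequality, so the deferred step does not affect the displayed equivalence. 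A small caveat applying equally to the paper: the $\Theta$-estimates degenerate for bounded parameters (e.g.\ $\binom{\ks}{3}=0$ for $\ks\le 2$, and $\lambda=0$ when $\ks\le 1$ and $\kt\le 1$, so ``$d_{TV}\sim\lambda$'' must be read appropriately there), and $d_0=\Theta((\ks+\kt)^4/n^6)$ tacitly assumes $\lambda=O(1)$ so that the prefactor $\min(1,\lambda^{-1})$ is of order $1$ --- you correctly note the latter for the regime $\lambda\to 0$.
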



\ignore{
Note that Equation~\eqref{dtv} is an exact inequality, \emph{i.e.}, not an upper bound.  
 Of course, it is not obvious how one would obtain an exact expression for the right-hand-side of Equation~\eqref{dtv}, due to the intricate dependencies.  This is where we shall appeal to Chen--Stein Poisson approximation, which provides approximations to sums of dependent random variables and provides guaranteed error bounds.  
}
\begin{figure}
\begin{tabular}{ccc}
1 \xymatrix{ & \bullet \ar@{.}[dr]^s \ar@{.}[dl]_s& \\ \bullet \ar@{.}[rr]_s & & \bullet } &
2 \xymatrix{ & \bullet \ar@{-}[dr] \ar@{-}[dl] \ar@{.}[r]^s & \bullet \ar@{.}[d]^s \\ \bullet \ar@{-}[rr]_t & & \bullet }  &
3\xymatrix{ & \bullet \ar@{-}[dr] \ar@{-}[dl] \ar@{.}[rr]^s & & \bullet \ar@{-}[dr] \ar@{-}[dl] \\ \bullet \ar@{-}[rr]_t & & \bullet \ar@{-}[rr]_t & & \bullet }  \\
4 \xymatrix{& & \bullet \ar@{-}[dl] \ar@{-}[dr] \\ & \bullet \ar@{-}[dr] \ar@{-}[dl] \ar@{-}[rr]_t & & \bullet \ar@{-}[dr] \ar@{-}[dl] \\ \bullet \ar@{-}[rr]_t & & \bullet \ar@{-}[rr]_t & & \bullet }  &
5 \xymatrix{ & \bullet \ar@{-}[dr] \ar@{-}[dl] \ar@/^1pc/@{.}[dr]^s & \\ \bullet \ar@{-}[rr]_t & & \bullet }  &
6 \xymatrix{ \bullet \ar@/^1pc/@{.}[rr]^s\ar@/_1pc/@{.}[rr]^s & & \bullet} \\
7 \xymatrix{ & \bullet \ar@{-}[dr] \ar@{-}[dl]  \ar@/^1pc/@{-}[dr] \ar@/_1pc/@{-}[dl]  \\ \bullet \ar@{-}[rr]_t \ar@/_1pc/@{-}[rr]_t & & \bullet } 
\end{tabular}
\caption{Pictoral representation of the bad events in a random graph model}
\label{bad events}
\end{figure}
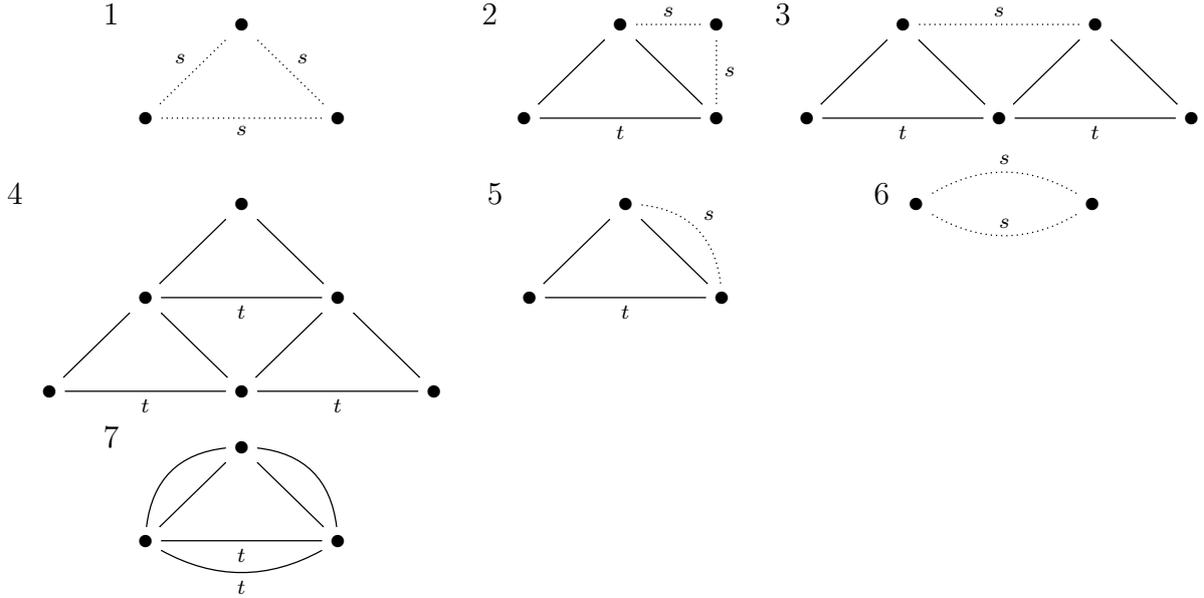

\begin{remark} \label{rejects}
{\rm
Let $\mathcal{G}(n,p)$ denote the random graph model consisting of the set of graphs with exactly $n$ nodes and each of the possible $\binom{n}{2}$ edges appearing independently with probability $p$, and also define its extension $\mathcal{G}(n,p_s, p_t)$, the set of graphs with exactly $n$ nodes and each of the possible $\binom{n}{2}$ edges appearing independently with probability $p_s$ and each of the possible $\binom{n}{3}$ triangles appearing independently with probability $p_t$.  
In this random graph model, the majority of the error in the approximation to 
$\mathcal{G}(n,\ks,\kt)$ is from the probability that the number of random edges/triangles is not \emph{exactly} the values specified.  This probability is given by a binomial distribution.  Let $S$ and $T$ denote the number of edges and triangles, respectively, in a random graph generated by $\mathcal{G}(n,p)$ or $\mathcal{G}(n,p_s, p_t)$.  

For $\mathcal{G}(n,p)$, with target $\ks$ specified in advance, we optimally choose $p_s = \ks/\binom{n}{2}$, which yields 
\[ \mathbb{P}(S = \ks) = \binom{\binom{n}{2}}{k_s}p_s^{k_s}(1-p_s)^{\binom{n}{2}-k_s} \sim \frac{1}{\sqrt{2\pi \ks\left(1-\ks/\binom{n}{2}\right)}}. \]
Similarly, for $\mathcal{G}(n,p_s,p_t)$, with targets $\ks$ and $\kt$ specified in advance, we optimally choose $p_s = \ks / \binom{n}{2}$ and $p_t = \kt/\binom{n}{3}$, which by independence yields
\[ \mathbb{P}(S = \ks, T = \kt) \sim \left(2\pi \sqrt{ \ks \kt \left(1-\frac{\ks}{\binom{n}{2}}\right)\left(1-\frac{\kt}{\binom{n}{3}}\right)}\right)^{-1}. \]

Denote by $\P_{p_s,p_t}$ the probaiblity measure in the 
random graph model $\mathcal{G}(n,p_s,p_t)$.  
Letting $A = \{S = \ks, T = \kt\}$ in Equation~\eqref{dtv}, we have
\[ d_{TV}(\PG, \P_{p_s,p_t}) \geq 1 - P(S = \ks, T=\kt). \]
These calculations demonstrate that total variation distance is too strong of a metric to be used for this type of approximation.  

}\end{remark}

\ignore{
\section{Poisson approximation}

The use of Poisson approximation in random graphs is not new.  
In \cite[Section4.3]{RandomGraphs}, a Poisson approximation theorem is presented for approximating the number of subgraphs in the Erdos--Renyi graph; other elements of Poisson approximation are also present in other sections.  In addition, the book \cite{PoissonApproximation} contains the general theory and several general applications of Poisson approximation, which includes an entire chapter devoted to graph theory applications.  In particular, Theorem ?? demonstrates how to approximate the number of subgraphs with a given number of vertices and edges in an Erdos--Renyi graph.

Our version is notably different, in that we have a generalization to an Erdos--Renyi graph, and also a generalization to the configuration method of \cite[Section 2.4]{RandomGraphs}.  As in those applications, the approximations lose accuracy when impossible events in one process are favored in another.  For example, the processes $\X^2$ and $\X^3$ can have coordinates larger than 1, whereas the process $\X^1$ cannot.  Thus, a priori, the parameters must be chosen in such a way that makes multiple edges unlikely.  

We outline the seven types of events that can cause differences in statistical estimation, and provide a total variation distance bound in order to provide a guarantee on the order of approximation.  These bounds are \emph{explicit for all finite values of the parameters}, and hence can provide 100\% guaranteed error estimates on various quantities of interest relating to the random ensemble.  We shall use this guarantee to provide an asymptotic enumeration result in Theorem ??.  

Let us first define these seven events, since they form the basis of our approximation error.  They are shown in Figure~\ref{bad events}.
In particular,
\begin{align*}
X_1 &=  \#\{\text{triplets of single edges that form a triangle} \}, \\
X_6 &=  \#\{\text{number of double edges}\}, \\
X_7 &=  \#\{\text{number of double triangles}\}.
\end{align*}
These events are the only ones that cause loss of accuracy in our approximation.  Let us define the ``good'' event $E := \{ X_1 = X_2 = \ldots X_7 = 0\}$.  We then have the following theorem.
\begin{thm}  
\label{dist}
Let $E = \{ X_1 = \ldots = X_7 = 0\}$.  Let $S$ and $T$ denote the random number of single edges and triangles in the graph $G$, respectively.  Then 
\[\mathcal{L}(\X^1) =_D \mathcal{L}( \X^2 | E, S=\ks, T=\kt),
\]
\[\mathcal{L}(\X^1) =_D \mathcal{L}( \X^3 | E).
\]
\end{thm}
Since the distributions $\X^2$ and $\X^3$ are more desirable to work with, and the conditioning is not, we now obtain a quantitative measure for the tradeoff of working with $\X^2$ and $\X^3$ directly rather than with $\X^1$.

\subsection{Total variation distance}

A common measure between two probability distributions is total variation distance.  It is defined by
\[ d_{TV}(\mathcal{L}(X), \mathcal{L}(Y)) = \sup_{A \subseteq \R} |P(X\in A) - P(Y\in A)|. \]

We define the quantities
\[ d_2 := d_{TV}(\X^1, \X^2), \]
\[ d_3 := d_{TV}(\X^1, \X^3), \]
and note that by Theorem~\ref{dist}, $d_2 = 0$ iff $E$ and $d_3 = 0$ iff $E$.  In particular, we have

\begin{thm}  For $i=2,3$,
\begin{equation}
d_i = d_{TV}( (X_1, \ldots, X_7), (0,\ldots, 0) ) = P\left( \sum_{j=1}^7 X_j > 0\right),
\label{dtv}
\end{equation}
where the distribution $(X_1, \ldots, X_7)$ depends on whether $i=2$ or $i=3$.
\end{thm}

At this point we have our first quantitative approximation to the total variation distance, \emph{i.e.}, a hard upper bound that depends directly on the events which are impossible for $\X^1$.  Of course, obtaining an exact expression for the right-hand-side of Equation~\eqref{dtv} is also not obvious, due to the intricate dependencies, and again we must appeal to Poisson approximation, with guaranteed error bounds, to estimate this quantity, as we demonstrate now.

}

\ignore{
\subsection{Estimating the number of bad events}


Let us start with the model $\mathcal{M}(n,\ks,\kt)$, which places $\ks$ single edges and $\kt$ triangles independently at random on the board, with no self-loops but possible multiple edges.  Denote the set of single edges by $K_s$, and the set of triangles by $K_t$.  

We define the unordered set of triplets of single edges,
\[\Gamma_1 := \{ \{a,b,c\} : a,b,c \in K_s, \text{ distinct} \}.
\]
Then $X_1 = \sum_{\alpha \in K_s} Y_\alpha$, where $Y_\alpha$ is the indicator random variable that the three single edges in $\alpha$ form a triangle.   

\ignore{
For $X_2$, we have a total of four nodes, two single edges and one triangle.  Thus our indicator random variable will be defined as
 \begin{align*}
Y_{i,j,k,\ell}^{(a,b), (u)}    =  
	      & \ X_{i,j,k}^u    \left( X_{i,j}^a   X_{i,k}^b   + X_{i,j}^aX_{j,k}^b      + X_{i,k}^aX_{j,k}^b        \right) \\
   	   + & \ X_{i,j,\ell}^u \left( X_{i,j}^a   X_{i,\ell}^b + X_{i,j}^aX_{j,\ell}^b   + X_{i,\ell}^aX_{j,\ell}^b  \right) \\
	   + & \ X_{i,k,\ell}^u \left( X_{i,k}^a X_{i,\ell}^b + X_{i,k}^aX_{k,\ell}^b + X_{i,\ell}^aX_{k,\ell}^b \right) \\
	   + & \ X_{j,k,\ell}^u \left( X_{j,k}^a X_{j,\ell}^b + X_{j,k}^aX_{k,\ell}^b + X_{j,\ell}^aX_{k,\ell}^b \right),
\end{align*}
and we have
\[ X_2 = \sum_{(a,b)\in K_s}\sum_{u\in K_t} \sum_{1 \leq i < j < k < \ell \leq n} Y_{i,j,k,\ell}^{(a,b),(u)}. \]
}

Similarly, define
\[ \Gamma_2 := \{ \{a,b\}, \{\alpha\} : a,b\in K_s, \alpha\in K_t, \text{ distinct}\} \]
Then as before $X_2 = \sum_{\alpha \in \Gamma_2} Y_{\alpha}$.  The other cases for $X_i$, $i=3,4,5,6,7$, are defined similarly.  

\ignore{
We let $X_{i,j}^s$ denote the number of single edges between nodes $i$ and $j$, $1 \leq i < j \leq n$, and $X_{i,j,k}^t$ denotes the number of triangles on the nodes $i, j, k$, $1\leq i < j < k \leq n$.  

  We can write a few of the conditions simply, viz., 
\begin{align*}
 X_1 & = \sum_{1\leq i<j<k\leq n} X_{i,j}^sX_{j,k}^sX_{i,k}^s, \\
 X_6 & = \sum_{1 \leq i < j \leq n} \binom{X_{i,j}^s}{2} \\ 
 X_7 & = \sum_{1 \leq i < j < k \leq n} \binom{X_{i,j,k}^t}{2} \\ 
 \end{align*}
 Others, for example, $X_2$, are more notationally messy since they admit more symmetrically equivalent formations,
 \begin{align*}
  X_2   = \sum_{1 \leq i < j < k < \ell \leq n} & X_{i,j,k}^t \left( X_{i,j}^s X_{i,k}^s + X_{i,j}^sX_{j,k}^s + X_{i,k}^sX_{j,k}^s \right) \\
   	   + & X_{i,j,\ell}^t \left( X_{i,j}^s X_{i,\ell}^s + X_{i,j}^sX_{j,\ell}^s + X_{i,\ell}^sX_{j,\ell}^s \right) \\
	   + & X_{i,k,\ell}^t \left( X_{i,k}^s X_{i,\ell}^s + X_{i,k}^sX_{k,\ell}^s + X_{i,\ell}^sX_{k,\ell}^s \right) \\
	   + & X_{j,k,\ell}^t \left( X_{j,k}^s X_{j,\ell}^s + X_{j,k}^sX_{k,\ell}^s + X_{j,\ell}^sX_{k,\ell}^s \right).
\end{align*}

\begin{fact}
Note that if we have a collection of single edges forming a triangle, and one of those single edges has multiple edges, we count each distinct collection of single edges as a triangle.
\end{fact}
}
\ignore{
Of course, the collection of random variables $\{X_{i,j}^s\}_{1\leq i<j\leq n}$ is a dependent collection, as is $\{X_{i,j,k}^t\}_{1\leq i < j < k \leq n}$, so while we cannot compute probabilities directly without conditioning, we can approximate certain quantities using Poisson approximation.  


  This event causes the total number of single edges to decrease by 3, and the total number of triangles to increase by 1, hence it adds to the approximation error.  We can write this The others are defined similarly.

In our problem we wish to define $\Gamma_1$ as the set of all triplets of vertices where each pair contains a single edge, and hence we inadvertently obtained a triangle.  Similarly, $\Gamma_2$ is the set of all quadruplets of vertices, one triplet containing a triangle, and two other pairs of vertices having single edges which combine with one of the edges of the triangle to form a second unintentional triangle.  We continue this for $\Gamma_j$, $j=1,\ldots, 7$, and defer the reader to Figure~\ref{bad events} for a pictoral description of the events of interest.

Next, for each $i \in \Gamma_j$, $j=1,2,\ldots,7$, we define the random variable $X_i^j$ as the indicator that the collection of vertices $i\in \Gamma_j$ forms the forbidden structure specified by $j$.  In this way, we then define
\[W_j := \sum_{i\in \Gamma_j} X_i^j, \qquad j=1,2,\ldots,7,\]
where $W_j$ counts the number of occurrences of the forbidden event.  Under certain conditions, $W_j$ is approximately Poisson distributed, and the event $\{W_j = 0\}$ is the good event in which no forbidden events occurred.  The final ingredient is the use of a theorem of Arratia, Goldstein, and Gordon which provides a total variation distance bound on the entire process from that of a Poisson process, for which an explicit upper bound can be obtained for all finite values of the parameters.  Whence, one obtains a quantitative bound that formally justifies any qualitative statistical estimation of parameters based on the independent random process.
}


\begin{thm}
Let $W = \sum_{i=1}^7 X_i$.  Suppose $Z$ is Poisson distributed with parameter $\lambda = \e W$.  Then 
\[ d_{TV}(W, Z) \leq  2 (b_1 + b_2), \]
where $b_1, b_2$ are defined by Equation~\eqref{b1} and Equation~\eqref{b2}, respectively.
We also have the slightly stronger
\[ e^{-\lambda} - d_0 \leq P(W=0) \leq e^{-\lambda} + d_{0}, \]
where $d_0 = \min(1,\lambda^{-1})(b_1+b_2)$.  
\end{thm}

The proof is an application of Theorems~1--3 in \cite{ChenStein}; it is straightforward, tedious, and uninteresting, and hence contained in Appendix~\ref{appendix}.  

We thus conclude 
\[ d_3 \leq e^{-\lambda} + d_0. \]
Again, we note that this bound holds for all finite values of the parameters, and is a hard inequality, \emph{i.e.}, not asymptotic.



\subsection{Random number of edges and triangles model}

For the graph model $\mathcal{G}(n,p_s,p_t)$, we have for each graph $G$
\begin{equation} \label{pG}
 p_{G} = \binom{n}{v(G)}c_{G}\, p_s^{k_s(G)} p_t^{k_t(G)}.
 \end{equation}
In this case we utilize a generalization to Theorem 5.A in \cite{PoissonBook}.  

\begin{thm}
$W$, the number of copies of a graph $G$ in $K_{n,p_s,p_t}$ satisfies
\[ d_{TV}(\mathcal{L}(W), Po(\lambda)) \leq (1-e^{-\lambda})\left( \frac{Var(W)}{\lambda} - 1 + 2p_s^{e_s(G)} p_t^{e_t(G)} \right).\]
\end{thm}

Using this theorem, then, we have an upper bound for $\mathbb{P}(W=0)$ under this model.  
}


\def\ks{k_S}
\def\kt{k_T}

\section{Proof of Theorem~\ref{dtv theorem}}
\label{proof}

First, we note that as a consequence of Equation~\eqref{dtv x12}, and using $A = E$ in Equation~\eqref{dtv}, we have
\[ d_{TV}(\PG, \PM) = 1-\P(X_1=\ldots=X_7=0) = 1-\P(W = 0), \]
where $W = \sum_{i=1}^7 X_i.$  
 Since the $X_i$, $i=1,\ldots, 7$ are not independent, we apply Chen--Stein Poisson approximation.  
Specifically, our proof is an application of Theorem~1 in \cite{ArratiaGoldstein}.  

We begin by defining the quantities $b_1$ and $b_2$, which are required to specify the upper bound in Theorem~\ref{dtv theorem}.  Suppose there is some countable or finite index set $I$.  For each $\alpha \in I$, let $Y_\alpha$ denote an indicator random variable, with $p_\alpha := \e Y_\alpha = P(Y_\alpha = 1) >0$, and $p_{\alpha\beta} := \e Y_\alpha Y_\beta$.  Define $W := \sum_{\alpha \in I} Y_\alpha$, and $\lambda := \e W = \sum_{\alpha\in I} p_\alpha$.  Next, for each $\alpha \in I$, we define a dependency neighborhood $B_\alpha$ which consists of all indices $\beta \in I$ for which $Y_\alpha$ and $Y_\beta$ are dependent.  Then we define the quantities
\begin{align}
\label{b1} b_1 :=&  \sum_{\alpha \in I} \sum_{\beta \in B_\alpha} p_\alpha p_\beta, \\
\label{b2} b_2 :=&  \sum_{\alpha \in I} \sum_{\alpha \neq \beta\in B_\alpha} p_{\alpha \beta}, \\
\label{b3} b_3 :=& \sum_{\alpha\in I} \e |\e\{Y_\alpha - p_\alpha \mid \sigma(Y_\beta: \beta \notin B_\alpha) \} |.
 \end{align}
Before one spends too much time parsing the precise meaning of $b_3$, we note that 
when dependency neighborhoods $B_\alpha$ are chosen so that $Y_\alpha$ and $Y_\beta$ are independent for $\beta \notin B_\alpha$, as in our setting, then $b_3 = 0$, 
and so it is just the first two quantities, $b_1$ and $b_2$, which need to be computed.  

\begin{thm}[\cite{ArratiaGoldstein}]
Let $Z$ denote a Poisson random variable, independent of $W$,  with $\e Z = \e W = \lambda$.  Then we have
\[ d_{TV}(\mathcal{L}(W),\mathcal{L}(Z)) \leq 2(b_1 + b_2 + b_3), \]
and
\[ |\P(W=0) - e^{-\lambda}| \leq \min(1,\lambda^{-1})(b_1+b_2+b_3). \]
\end{thm}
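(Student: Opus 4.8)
\emph{Proof proposal.} This is Theorem~1 of~\cite{ArratiaGoldstein}, and the natural route is the Chen--Stein method; here is the shape of the argument. One starts from the Stein characterization of the Poisson law: a $\mathbb{Z}_{\geq0}$-valued random variable $Z$ is Poisson$(\lambda)$ if and only if $\e[\lambda f(Z+1)-Zf(Z)]=0$ for every bounded $f:\mathbb{Z}_{\geq0}\to\R$. For a fixed target set $A\subseteq\mathbb{Z}_{\geq0}$, let $f=f_{A,\lambda}$ be the unique solution with $f(0)=0$ of the recursion $\lambda f(j+1)-jf(j)=\mathbf{1}\{j\in A\}-\P(Z\in A)$, $j\geq0$. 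Substituting $W$ and taking expectations yields the exact identity
\[ \P(W\in A)-\P(Z\in A)=\e\bigl[\lambda f(W+1)-Wf(W)\bigr], \]
so, recalling \eqref{dtv}, the entire problem is to bound the right-hand side uniformly in $A$; the sharper statement about $\P(W=0)$ is the single case $A=\{0\}$, which in the application of Section~\ref{proof} is exactly the event $E$ appearing in \eqref{dtv x12}.

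\emph{Analytic input (Stein magic factors).} Solving the recursion by summation and estimating the resulting closed form, one shows that uniformly over all $A$ the first differences of $f$ obey $\sup_{j\geq1}|f(j+1)-f(j)|\leq\frac{1-e^{-\lambda}}{\lambda}\leq\min(1,\lambda^{-1})$, and that $\|f\|_\infty$ satisfies a comparable explicit bound. For $A=\{0\}$ the solution is available in closed form and the same first-difference estimate $\min(1,\lambda^{-1})$ holds, which is precisely what produces the factor $\min(1,\lambda^{-1})$ in the second displayed inequality of the theorem.

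\emph{Local-dependence input.} Writing $\lambda=\sum_{\alpha\in I}p_\alpha$ and expanding,
\[ \e\bigl[\lambda f(W+1)-Wf(W)\bigr]=\sum_{\alpha\in I}\Bigl(p_\alpha\,\e f(W+1)-\e\bigl[Y_\alpha f(W)\bigr]\Bigr). \]
For each $\alpha$, split $W=Y_\alpha+Z_\alpha+W_\alpha$ with $Z_\alpha:=\sum_{\beta\in B_\alpha\setminus\{\alpha\}}Y_\beta$ and $W_\alpha:=\sum_{\beta\notin B_\alpha}Y_\beta$; since $Y_\alpha\in\{0,1\}$ we have $\e[Y_\alpha f(W)]=\e[Y_\alpha f(1+Z_\alpha+W_\alpha)]$. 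Three triangle-inequality steps then finish it: (a) swapping $f(1+Z_\alpha+W_\alpha)$ for $f(1+W_\alpha)$ costs at most $\sup_{j\geq1}|f(j+1)-f(j)|\cdot\sum_\alpha\sum_{\beta\in B_\alpha\setminus\{\alpha\}}p_{\alpha\beta}$, i.e.\ the magic factor times $b_2$ of \eqref{b2}; (b) replacing $\e[Y_\alpha f(1+W_\alpha)]$ by $p_\alpha\,\e f(1+W_\alpha)$ leaves $\e[(Y_\alpha-p_\alpha)f(1+W_\alpha)]$, which upon conditioning on $\sigma(Y_\beta:\beta\notin B_\alpha)$, with respect to which $W_\alpha$ is measurable, is bounded by $\|f\|_\infty\sum_\alpha\e\bigl|\e\{Y_\alpha-p_\alpha\mid\sigma(Y_\beta:\beta\notin B_\alpha)\}\bigr|$, i.e.\ $b_3$ of \eqref{b3} up to the constant; (c) swapping $f(1+W_\alpha)$ back for $f(1+W)=f(W+1)$ costs at most $\sup_{j\geq1}|f(j+1)-f(j)|\cdot\sum_\alpha p_\alpha\sum_{\beta\in B_\alpha}p_\beta$, i.e.\ the magic factor times $b_1$ of \eqref{b1}. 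Summing (a)--(c), inserting the magic-factor bounds, and taking the supremum over $A$ gives $d_{TV}(\mathcal{L}(W),\mathcal{L}(Z))\leq2(b_1+b_2+b_3)$, the clean universal constant $2$ being a convenient over-estimate of the Stein factors; specializing to $A=\{0\}$ gives $|\P(W=0)-e^{-\lambda}|\leq\min(1,\lambda^{-1})(b_1+b_2+b_3)$.

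\emph{Main obstacle.} The only genuinely delicate step is the analytic input: showing the Stein solutions $f_{A,\lambda}$ are uniformly Lipschitz with constant $\min(1,\lambda^{-1})$ requires careful manipulation of the defining recursion --- monotonicity of partial sums of the Poisson weights, the identity $(j+1)\P(Z=j+1)=\lambda\,\P(Z=j)$, and a short optimization in $j$. Everything afterwards is bookkeeping; the one subtlety is the conditional form of $b_3$, which is exactly why the dependency neighborhoods $B_\alpha$ enter, and in the present setting they are chosen so that $Y_\alpha$ is independent of $\{Y_\beta:\beta\notin B_\alpha\}$, whence $b_3=0$ and only $b_1$ and $b_2$ remain for the subsequent lemmas to estimate.
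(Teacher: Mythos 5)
The paper offers no proof of this statement: it is imported verbatim as Theorem~1 of Arratia--Goldstein--Gordon, and the authors explicitly treat it as a black box (the surrounding text only explains why $b_3=0$ in their setting). Your proposal is therefore not comparable to anything in the paper itself, but it is a faithful outline of the standard Chen--Stein proof of the cited theorem: the Stein equation $\lambda f(j+1)-jf(j)=\mathbf{1}\{j\in A\}-\P(Z\in A)$, the decomposition $W=Y_\alpha+Z_\alpha+W_\alpha$, and the three triangle-inequality steps producing $b_2$, $b_3$, and $b_1$ against the Lipschitz and sup-norm bounds on the Stein solution are exactly how the original result is established. Two small points of looseness, neither fatal: (i) the constant $2$ and the normalization of $d_{TV}$ deserve care --- Arratia et al.\ use the convention $\|\mu-\nu\|=2\sup_A|\mu(A)-\nu(A)|$, whereas Equation~\eqref{dtv} of this paper omits the factor $2$, so with the paper's convention the bound $2(b_1+b_2+b_3)$ is simply a factor-of-two overestimate and remains valid; (ii) for the refined inequality on $\P(W=0)$ you need not only the first-difference bound $\min(1,\lambda^{-1})$ for $f_{\{0\},\lambda}$ but also a sup-norm bound of the same order to control the $b_3$ contribution --- you gesture at this but do not state it, which is harmless here since the dependency neighborhoods are chosen so that $b_3=0$. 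I would accept the sketch as a correct account of the cited theorem's proof, while noting that for the purposes of this paper a citation suffices.
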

\ignore{
\begin{thm}[\cite{ArratiaGoldstein}]
Let $(X_\alpha),$ $\alpha\in I,$ denote a process of independent Bernoulli random variables with the same marginal distributions as $(Y_\alpha)$, $\alpha\in I$.  Then we have  
\[ d_{TV}((X_\alpha)_{\alpha\in I}, (Y_\alpha)_{\alpha\in I}) \leq 2(2b_1+2b_2+b_3)+2 \sum_\alpha p_\alpha^2. \]
\end{thm}
}

\ignore{
In order to apply this to our random graph model, we first note that for each triplet of single edges $\alpha \in \Gamma_1$, at most one of the collection $\{Y_{i,j,k}^{(a,b,c)}\}_{1\leq i<j<k\leq n}$ can be one.  Thus, we define 
\[ Y_\alpha :=  \sum_{1\leq i<j<k\leq n} Y_{i,j,k}^{(a,b,c)}, \qquad  \alpha \in \Gamma_1,
\]
which is also a Bernoulli random variable.  Similarly, we define for each $\alpha \in \Gamma_2$, 
\[Y_\alpha := \sum_{1 \leq i<j<k<\ell \leq n} Y_{i,j,k,\ell}^{(a,b),(u)}, \qquad \{(a,b),(u)\}=\alpha \in \Gamma_2.\]
We do the same for the remaining cases as well.
}

Denote the set of single edges by $K_s$, and the set of triangles by $K_t$.  We define the unordered set of triplets of single edges,
\[\Gamma_1 := \{ \{a,b,c\} : a,b,c \in K_s, \text{ distinct} \}.
\]
Then $X_1 = \sum_{\alpha \in K_s} Y_\alpha$, where $Y_\alpha$ is the indicator random variable that the three single edges in $\alpha$ form a triangle.   

Similarly, define
\[ \Gamma_2 := \{ \{a,b\}, \{\alpha\} : a,b\in K_s, \alpha\in K_t, \text{ distinct}\}. \]
Then, as before, $X_2 = \sum_{\alpha \in \Gamma_2} Y_{\alpha}$.  The other cases for $\Gamma_i$ and $X_i$, $i=3,4,5,6,7$, are defined similarly.  

Now we take $I = \bigcup_{i=1}^7 \Gamma_i$.  Then for each $\alpha \in I$, we let $B_\alpha$ denote the set of indices $\beta$ for which $Y_\alpha$ and $Y_\beta$ are independent; this is precisely the set of collections of indices where $\alpha$ and $\beta$ share any combination of \emph{at least 2} single edges or triangles.  

  For example, when $\alpha = \{a,b,c\} \in \Gamma_1$, then $B_\alpha$ is the set of all $Y_\beta$, $\beta \in I$ which contain at least two of $a$, $b$, or $c$.  For $\beta = \{ \{a,b\},\{u\}\}\in \Gamma_2$, we have $\beta \in B_\alpha$.  We also have $Y_\beta \notin B_\alpha$ for any $\alpha \in \Gamma_1$ and $\beta \in \Gamma_4$, since $\Gamma_1$ only consists of single edges and $\Gamma_4$ only consists of triangles.  
  
When $\alpha, \beta \in I$ do not share any elements, it is obvious that $Y_\alpha$ and $Y_\beta$ are independent.  Furthermore, even when $\alpha, \beta$ share exactly one element, $Y_\alpha$ and $Y_\beta$ are still independent, since conditioning on an occurrence of $Y_\alpha$ does not give any information regarding \emph{where} the occurrence occurred.  

It now remains to compute the desired quantities; we start with $b_1$.  By simple counting arguments, we have the following.

\begin{lemma}\label{ps}
\begin{equation}\label{pvals}
\large
p_\alpha = \left\{\begin{array}{cc}
	 \frac{\binom{n}{3}}{\binom{n}{2}^3}, & \alpha \in \Gamma_1, \\
	 \frac{\binom{n}{4} \binom{4}{3} \cdot 3}{\binom{n}{3} \binom{n}{2}^2}, & \alpha \in \Gamma_2,\\
	 \frac{\binom{n}{5} \binom{5}{3} \binom{3}{2}}{\binom{n}{3}^2 \binom{n}{2}}, & \alpha \in \Gamma_3, \\
	 \frac{\binom{n}{6} \binom{6}{3} \binom{3}{2}^2}{\binom{n}{3}^3}, & \alpha \in \Gamma_4, \\
	 \frac{\binom{n}{3}\cdot 3}{\binom{n}{3}\binom{n}{2}}, & \alpha \in \Gamma_5, \\
	 \frac{\binom{n}{2}}{\binom{n}{2}^2}, & \alpha \in \Gamma_6, \\
	 \frac{\binom{n}{3}}{\binom{n}{3}^2}, & \alpha \in \Gamma_7.
\end{array} \right\}
\end{equation}
\end{lemma}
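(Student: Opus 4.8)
I would prove each line of \eqref{pvals} by an elementary counting argument that uses only the independence built into $\mathcal{M}(n,\ks,\kt)$. Fix $i$ and $\alpha\in\Gamma_i$, and let $s$ and $t$ be the number of single edges, resp. triangles, making up $\alpha$, so that $(s,t)$ runs through $(3,0),(2,1),(1,2),(0,3),(1,1),(2,0),(0,2)$ as $i=1,\dots,7$. In $\mathcal{M}(n,\ks,\kt)$ the $s$ single edges of $\alpha$ are dropped independently and uniformly among the $\binom{n}{2}$ pairs and the $t$ triangles of $\alpha$ independently and uniformly among the $\binom{n}{3}$ triples, all independently of where the remaining $\ks-s$ edges and $\kt-t$ triangles land. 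Since $\{Y_\alpha=1\}$ --- ``the objects of $\alpha$ realize the forbidden picture $i$ of Figure~\ref{bad events}'' --- depends only on the positions of those $s+t$ objects, we have
\[
p_\alpha=\frac{N_i}{\binom{n}{2}^{\,s}\,\binom{n}{3}^{\,t}},
\]
where $N_i$ is the number of placements of the (labelled) objects of $\alpha$ that produce picture $i$. The denominator is already the one appearing in \eqref{pvals}, so the task is reduced to evaluating $N_i$.

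For each $i$ the count $N_i$ factors as: (i) choose the set of $m_i$ distinct nodes the configuration spans, with $m_i=3,4,5,6,3,2,3$ read off from the pictures, contributing $\binom{n}{m_i}$; and (ii) choose the internal arrangement on that fixed labelled node set, i.e. decide which node plays which role in the picture and on which pair, resp. triple, each labelled object sits. For $i=1$ and $i=6,7$ step (ii) is forced once the node set is chosen, so $N_i=\binom{n}{m_i}$; for $i=5$ there is the single extra choice of which side of the triangle the stray edge lies on, contributing a factor $3$. The richer cases are $i=2,3,4$, where step (ii) supplies the remaining binomial factors of \eqref{pvals}: for $i=2$ one selects which $3$ of the $4$ nodes carry the genuine triangle and which of its sides is reused, giving $\binom{4}{3}$ and $\binom{3}{2}$; for $i=3$ one selects the $3$ nodes of one of the two triangles and the pair of them participating in the shared part, giving $\binom{5}{3}$ and $\binom{3}{2}$; and for $i=4$ the analogous pair of choices at the outer triangles gives $\binom{6}{3}$ and $\binom{3}{2}^2$. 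Collecting the factors and simplifying with routine identities such as $\binom{n}{3}(n-3)=\binom{4}{3}\binom{n}{4}$ produces the seven entries of \eqref{pvals}.

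\textbf{Main obstacle.} The single delicate point --- and essentially the whole content of these ``simple counting arguments'' --- is the bookkeeping in step (ii). The single edges and triangles within a given $\alpha$ are labelled objects, and several of the forbidden pictures (notably $i=3$ and $i=4$) carry nontrivial symmetries, so it is easy to be off by a spurious automorphism factor or by a mismatch between how one orients the picture and how $\Gamma_i$ and $Y_\alpha$ were defined. I would make this watertight by writing down, for each $i$ separately, an explicit bijection between the realizing placements and the tuples (node set; role assignment; placement of each labelled object on the pair or triple its role forces), so that $N_i$ is by construction the stated product of binomial coefficients. Everything after that is arithmetic, and \eqref{pvals} --- hence the lemma --- follows.
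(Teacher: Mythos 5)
Your overall strategy is exactly the one the paper intends: the paper offers no proof of this lemma beyond the phrase ``by simple counting arguments,'' and the argument it has in mind is precisely your $p_\alpha = N_i/\bigl(\binom{n}{2}^{s}\binom{n}{3}^{t}\bigr)$ with $N_i$ factored as (choice of spanned node set) times (internal arrangement). So there is no methodological divergence to report.

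The problem is that the step you yourself single out as the main obstacle --- the internal-arrangement count for \emph{labelled} objects --- is never actually carried out, and if you carry it out as described you do not land on the stated entries. Take $i=1$: $\alpha=\{a,b,c\}$ consists of three distinct elements of $K_s$, each placed independently and uniformly, so the relevant sample space is the set of \emph{ordered} triples of locations, of size $\binom{n}{2}^3$, and for each of the $\binom{n}{3}$ node triples there are $3!$ assignments of the labelled edges $a,b,c$ to the three sides. Hence $p_\alpha = 3!\binom{n}{3}/\binom{n}{2}^3$; your claim that ``step (ii) is forced once the node set is chosen'' is false for labelled edges, and the sanity check $p_\alpha = 2(n-2)/\binom{n}{2}^2$ (condition on the first edge, pick the apex, place the remaining two) --- or simply the case $n=3$, where three random edges on $K_3$ form a triangle with probability $3!/3^3=2/9$, not $1/27$ --- confirms the missing $3!$. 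The same issue produces an extra $2!$ for $i=2$ (the two single edges of $\alpha\in\Gamma_2$ can be swapped onto the two new sides) and further multiplicities for $i=3$ and $i=4$, e.g.\ the $2\times 2$ choices of which non-shared vertices the single edge joins in picture 3. So the bijection you propose cannot have the stated product as its image size: either the role assignment must absorb the labelling of the objects (adding factorials to the numerator) or the denominator must be replaced by a count of unordered placements. These discrepancies are bounded constants, so the asymptotic orders quoted in Lemma~\ref{lambda} and the corollaries survive, but the lemma is advertised as an exact, pre-asymptotic identity, and your proof as written asserts rather than derives its constants. To close the gap you must write out step (ii) for each $i$ with the labelled objects tracked explicitly, and then reconcile the result with Equation~\eqref{pvals}.
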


Next, since the terms $p_\alpha$ and $p_\beta$ in the sum in Equation~\eqref{b1} do not depend on the particular set of indices, we simply need to count the number overlapping indices for which $Y_\alpha$ and $Y_\beta$ are dependent; when $\alpha \in \Gamma_i$ and $\beta \in \Gamma_j$, we denote the number of indices $\beta\in B_\alpha$ by $C_{i,j}$.  The lemma below follows by straightforward counting.  
\begin{lemma} \label{Cs}
We have 

\[
\begin{array}{llll}
C_{1,1} = \binom{\ks}{4}\binom{4}{2}, & C_{1,2} = \binom{\ks}{3} \binom{3}{2} \kt, & C_{1,3} = 0, & C_{1,4} = 0 \\ 
C_{1,5} = 0 & C_{1,6} = \binom{\ks}{3}\binom{3}{2} & C_{1,7} = 0 & C_{2,2} = \binom{\ks}{2}\binom{\kt}{2}+\binom{\ks}{3}\binom{3}{1}\kt \\
 C_{2,3} = \binom{2}{1}\binom{\ks}{2}\binom{2}{1}\binom{\kt}{2} & C_{2,4} = 0 & C_{2,5} = \binom{\ks}{2}\binom{2}{1}\kt & C_{2,6} = \binom{\ks}{2}\kt \\
 C_{2,7} = 0 & C_{3,3} = \binom{\kt}{2}\binom{\ks}{2}+\ks \binom{\kt}{3}\binom{3}{1} & C_{3,4} = \ks \binom{\kt}{3}\binom{3}{1} & C_{3,5} = \ks \binom{\kt}{2}\binom{2}{1} \\
 C_{3,6} = 0 & C_{3,7} = \binom{\ks}{1}\binom{\kt}{2} & C_{4,4} = \binom{\kt}{4}\binom{4}{2} & C_{4,5} = 0 \\
 C_{4,6} = 0 & C_{4,7} = \binom{\kt}{3}\binom{3}{2} & C_{5,5} = 0 & C_{5,6} = 0 \\
 C_{5,7} = 0 & C_{6,6} = 0 & C_{6,7} = 0 & C_{7,7} = 0.
\end{array}
\]
\end{lemma}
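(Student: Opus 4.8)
The plan is to reduce Lemma~\ref{Cs} to a bounded case analysis driven entirely by the ``token composition'' of the index families. Recall from the preceding discussion that $Y_\alpha$ and $Y_\beta$ are dependent precisely when $\alpha$ and $\beta$ share at least two of their constituent labeled objects (single edges or triangles), while sharing one object still leaves them independent. Hence, for fixed types $i\le j$, the number $C_{i,j}$ counts the pairs $(\alpha,\beta)$ with $\alpha\in\Gamma_i$, $\beta\in\Gamma_j$, $\alpha\neq\beta$, whose label sets overlap in at least two objects: unordered pairs when $i=j$, and pairs with the first coordinate in $\Gamma_i$ when $i<j$ (which is the same as unordered in that case, since $\Gamma_i\cap\Gamma_j=\emptyset$ for $i\ne j$).

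First I would record the composition of each family as a pair $(s_i,t_i)$ of single-edge and triangle counts: $\Gamma_1=(3,0)$, $\Gamma_2=(2,1)$, $\Gamma_3=(1,2)$, $\Gamma_4=(0,3)$, $\Gamma_5=(1,1)$, $\Gamma_6=(2,0)$, $\Gamma_7=(0,2)$. Since a labeled single edge is never a labeled triangle, the overlap between $\alpha\in\Gamma_i$ and $\beta\in\Gamma_j$ splits as (shared single edges) $+$ (shared triangles), with the former at most $\min(s_i,s_j)$ and the latter at most $\min(t_i,t_j)$; dependence requires the sum to be $\ge 2$, and $\alpha\ne\beta$ forbids the overlap from exhausting the label set of the smaller index. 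This single constraint already forces the fifteen vanishing entries: $C_{1,3}=C_{1,5}=C_{2,4}=C_{2,7}=C_{3,6}=C_{4,5}=C_{5,6}=C_{5,7}=0$ because one side contributes only a single token of the only type the two sides have in common; $C_{1,4}=C_{1,7}=C_{4,6}=C_{6,7}=0$ because the two compositions share no common token type at all; and $C_{5,5}=C_{6,6}=C_{7,7}=0$ because the only way to overlap in two tokens there is $\alpha=\beta$.

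For each of the thirteen remaining entries I would enumerate the feasible overlap profiles $(a,b)=(\#\text{shared single edges},\ \#\text{shared triangles})$ with $a+b\ge 2$ and $(a,b)$ not equal to the smaller composition, and for each profile count by the standard recipe: choose the union of single-edge labels (a set of size $s_i+s_j-a$ among the $\ks$ available) and distribute it to $\alpha$ and $\beta$ consistently with the profile, do likewise for triangle labels among the $\kt$ available, multiply the two factors, and sum over profiles. For example $C_{1,1}$ has the single profile $(2,0)$: pick the $4$-element union, $\binom{\ks}{4}$ ways, then the unordered pair of $3$-subsets, of which there are exactly $\binom{4}{2}$ since any two distinct $3$-subsets of a $4$-set meet in two elements; $C_{2,2}$ splits into ``share both single edges, differ in the triangle'' ($\binom{\ks}{2}\binom{\kt}{2}$) and ``share one single edge and the triangle'' ($\binom{\ks}{3}\binom{3}{1}\kt$); $C_{2,3}$ has the unique profile ``share one single edge and one triangle,'' giving $\binom{2}{1}\binom{\ks}{2}\cdot\binom{2}{1}\binom{\kt}{2}$; and similarly for the rest. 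The computations themselves are elementary products of binomial coefficients; the main obstacle is purely organizational: verifying that each dependent pair falls under exactly one overlap profile (automatic, as the profile is a function of the pair), that the $\alpha=\beta$ diagonal is excluded throughout, and that the ordered-versus-unordered convention is handled correctly in the diagonal cases $i=j$ (in practice, designating which shared object plays which role in the natural way already yields the unordered count, as the $\binom{3}{1}$ factor in $C_{2,2}$ illustrates). Collecting the resulting $28$ values gives the table in the statement.
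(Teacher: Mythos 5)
Your proposal is correct and is exactly the ``straightforward counting'' the paper invokes without writing out: the paper gives no proof of Lemma~\ref{Cs} beyond asserting it, and your token-composition bookkeeping, the identification of the fifteen vanishing entries, and the profile-by-profile enumeration of the remaining thirteen all reproduce the table (I checked each entry, including the unordered convention on the diagonal that makes $C_{1,1}=\binom{\ks}{4}\binom{4}{2}$ rather than twice that). One phrasing slip to fix: the exclusion rule ``$(a,b)$ not equal to the smaller composition'' is not what $\alpha\neq\beta$ requires --- for $i\neq j$ the overlap may well exhaust the smaller index's entire label set (e.g.\ $\alpha=\{a,b,c\}\in\Gamma_1$, $\beta=\{a,b\}\in\Gamma_6$, which is precisely what $C_{1,6}=\binom{\ks}{3}\binom{3}{2}$ counts, and likewise $C_{2,5}$, $C_{2,6}$, $C_{3,5}$, $C_{3,7}$, $C_{4,7}$); the only profile to exclude is the one forcing $\alpha=\beta$, which occurs only when $i=j$ and the overlap equals the full common composition. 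Your actual computations do not apply the faulty version of the rule, so the table stands, but the rule as literally stated would wrongly zero out six nonzero entries.
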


We can now state the formula for $b_1$ below.

\begin{prop}
For the random graph model $\mathcal{M}(n,k_s, k_t)$, we have
\begin{equation} \label{eq b1}
b_1 = \sum_{i=1}^7 \sum_{j=i}^7 C_{i,j}\, p_i\, p_j,
\end{equation}
where the values are specified in Lemmas~\ref{ps}, \ref{Cs}, and \ref{Gs}.
\ignore{
The values of $|B_{i,j}|$ are contained in the following table,  Many of the quantities appear multiple times and so we make the following definitions: $x := 2!\binom{k_s}{2}, y := 2! \binom{k_t}{2}$,
\[\tiny
\begin{array}{llllllll}
j\setminus i & 1 & 2 & 3 & 4 & 5 & 6 & 7 \\
1 & 3x + 3k_s + 3! & 3x + 6 k_s & 3 k_s & 0 & 3 k_s & 6 k_s & 0 \\
2 & 		  & k_t + 2 k_t k_s + 2 k_s + 2 & 2k_t + 4 k_t k_s + 2 k_s & 3 k_t & k_t + 2 k_s & 0 & 2 k_t \\
3 & 		  & 				    & k_s + 4 k_t + y + 2 k_s k_t + 2 & 3 k_t + 3 y & 2 k_t + k_s & 2 k_s & 2 k_t\\
4 & & & & 3 k_t + 3 y + 3! & 3 k_t & 0 & 6 k_t \\
5 & & & & & k_t + k_s + k_t k_s +1 & 2 k_s & 2 k_t  \\
6 & & & & & & 2 & 0\\
7 & & & & & & & 2, \\
\end{array}
\]
and $p_i$ is given in Equation~\eqref{pvals}.  
}
\end{prop}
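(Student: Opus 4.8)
The identity is essentially a bookkeeping reorganisation of the defining double sum~\eqref{b1}, together with a citation of the counting lemmas; the only genuine labour sits inside Lemma~\ref{Cs}. The plan is to unfold $b_1=\sum_{\alpha\in I}\sum_{\beta\in B_\alpha}p_\alpha p_\beta$ along the partition $I=\Gamma_1\sqcup\cdots\sqcup\Gamma_7$, exploit that $p_\alpha$ is constant on each block, and thereby reduce everything to counting dependent pairs of forbidden configurations.

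First I would record that the seven families are pairwise disjoint as index sets: each $\Gamma_i$ consists of configurations with a distinct multiset of ``blocks'' (triples of single edges, an edge-edge-triangle block, an edge-triangle-triangle block, a triangle-triple, an edge-triangle block, a pair of single edges, a pair of triangles, respectively), so every pair $(\alpha,\beta)$ has a well-defined type $(i,j)$ with $\alpha\in\Gamma_i$, $\beta\in\Gamma_j$. By Lemma~\ref{ps} the weight $p_\alpha$ depends only on the type of $\alpha$, hence
\[
b_1 \;=\; \sum_{i=1}^7\sum_{j=1}^7 p_i\,p_j\cdot\#\bigl\{(\alpha,\beta)\in\Gamma_i\times\Gamma_j : \beta\in B_\alpha\bigr\}.
\]
Next, since (by the paragraph preceding Lemma~\ref{ps}) the relation $\beta\in B_\alpha$ is symmetric and holds exactly when $\alpha$ and $\beta$ share at least two blocks, the count for type $(i,j)$ equals the count for type $(j,i)$; collecting the two contributions reduces the double sum to $i\le j$. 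I would then \emph{define} $C_{i,j}$, for $i\le j$, to be the resulting coefficient of $p_ip_j$ — the number of dependent pairs with one index in $\Gamma_i$ and one in $\Gamma_j$ — taking the usual care on the diagonal $i=j$ not to double-count an unordered pair or to count $\alpha$ against itself. This already yields $b_1=\sum_{i=1}^7\sum_{j=i}^7 C_{i,j}\,p_i\,p_j$, and it only remains to identify these $C_{i,j}$ with the values of Lemma~\ref{Cs}.

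That identification is where the real work lies. For each of the $\binom{7}{2}+7=28$ type pairs one reads off from Figure~\ref{bad events} the complete list of combinatorially distinct overlap patterns in which a type-$i$ configuration and a type-$j$ configuration can agree on two or more blocks, and for each pattern one selects the distinct single edges from the $\ks$ available and the distinct triangles from the $\kt$ available — the sizes $|\Gamma_i|$ of Lemma~\ref{Gs} entering as the leading factor counting the ``free'' blocks of $\alpha$ — and multiplies by the pattern's internal symmetry factor. Many pairs vanish for structural reasons (a family built only from single edges can never meet a family built only from triangles in two blocks, giving $C_{1,3}=C_{1,4}=C_{1,5}=C_{1,7}=0$, and similarly $C_{2,4}=C_{2,7}=C_{4,5}=C_{5,6}=C_{5,7}=C_{5,5}=C_{6,6}=C_{6,7}=C_{7,7}=0$), while the surviving pairs split into a handful of sub-cases (e.g.\ $C_{2,2}$ splits into ``two edges shared, triangles distinct'' and ``triangle shared, one edge shared''). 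Substituting the resulting values of Lemmas~\ref{ps}, \ref{Cs}, \ref{Gs} into the folded sum then completes the proof.

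The main obstacle is thus not conceptual but combinatorial accuracy: producing, for every one of the $28$ type pairs, an exhaustive and non-overlapping enumeration of overlap patterns with the correct multiplicities and a consistent ordered-versus-unordered convention on the diagonal. This is ``straightforward and tedious'' in precisely the sense the surrounding text indicates, and is the only place where an error could realistically arise.
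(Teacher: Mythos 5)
Your proposal matches the paper's argument essentially verbatim: the paper likewise observes that $p_\alpha$ depends only on which $\Gamma_i$ contains $\alpha$, reduces Equation~\eqref{b1} to counting dependent pairs of each type $(i,j)$, and delegates the resulting $28$ overlap enumerations to Lemma~\ref{Cs} as ``straightforward counting.'' Your explicit attention to the symmetry of the relation $\beta\in B_\alpha$ and to the unordered-versus-ordered convention on the diagonal is, if anything, slightly more careful than the paper's own treatment, which leaves those conventions implicit in the values of $C_{i,j}$.
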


\begin{cor}
Asymptotically, as $n\to\infty$, 
we have
\[ b_1  \sim O\left( \frac{(\ks+\kt)^4}{n^6}\right).  \]
\end{cor}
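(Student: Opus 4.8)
The proof is a direct estimation of the right-hand side of Equation~\eqref{eq b1}: one substitutes the exact probabilities $p_\alpha$ from Lemma~\ref{ps} and the overlap counts $C_{i,j}$ from Lemma~\ref{Cs} into $b_1 = \sum_{i=1}^{7}\sum_{j=i}^{7} C_{i,j}\,p_i\,p_j$, determines the order of magnitude of each of the twenty-eight summands, and keeps the largest. The plan therefore splits into estimating the $p_i$, estimating the $C_{i,j}$, and a final bookkeeping step in which the two are combined.

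For the first part, writing $\binom{n}{k} = \frac{n^k}{k!}\bigl(1 + O(n^{-1})\bigr)$ and cancelling in the ratios of Lemma~\ref{ps} gives $p_1, p_2, p_3, p_4, p_7 = \Theta(n^{-3})$ and $p_5, p_6 = \Theta(n^{-2})$; in particular $p_i p_j = \Theta(n^{-6})$ whenever $i, j \in \{1,2,3,4,7\}$, while any summand in which $5$ or $6$ appears carries at most $n^{-5}$. For the second part, each entry of Lemma~\ref{Cs} is a polynomial in $\ks, \kt$ with nonnegative coefficients; replacing $\binom{m}{r}$ by $\Theta(m^r)$ shows that $C_{i,j}$ has total degree $d_{i,j} \le 4$ and $C_{i,j} = O\bigl((\ks + \kt)^{d_{i,j}}\bigr)$, with $d_{i,j} = 4$ attained exactly at $(i,j) \in \{(1,1), (1,2), (2,2), (2,3), (3,3), (3,4), (4,4)\}$, all of which have $i, j \in \{1,2,3,4\}$.

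Combining the two, the seven degree-$4$ pairs together contribute $\Theta\bigl((\ks + \kt)^4 / n^6\bigr)$: each of the monomials $\ks^4, \ks^3 \kt, \ks^2 \kt^2, \ks \kt^3, \kt^4$ occurs among these $C_{i,j}$, so this order is genuinely attained and not merely an upper bound. What remains is to confirm that every other summand is $O\bigl((\ks+\kt)^4/n^6\bigr)$. The pairs with $i,j \in \{1,2,3,4,7\}$ and $d_{i,j} \le 3$ (the only nonzero ones being $(3,7)$ and $(4,7)$) are immediately smaller, being $O\bigl((\ks+\kt)^3/n^6\bigr)$. The pairs in which a $5$ or a $6$ occurs are pruned by the many vanishing entries of Lemma~\ref{Cs} down to $(1,6), (2,5), (2,6), (3,5)$, and here one must weigh their weaker power of $n$, namely $n^{-5}$, against the lower degree ($\le 3$) of the accompanying $C_{i,j}$; this is the delicate point of the argument. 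Once these terms are absorbed into $O\bigl((\ks+\kt)^4/n^6\bigr)$, summing the finitely many contributions gives the corollary.

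The genuine obstacle is thus this last comparison step: not the asymptotics of the $p_i$ or of the $C_{i,j}$ individually, which are routine, but the term-by-term matching of $n$-powers against degrees, and in particular making sure that the four surviving ``double-edge'' type contributions $(1,6), (2,5), (2,6), (3,5)$, whose $n$-power is only $n^{-5}$, do not overtake the claimed order.
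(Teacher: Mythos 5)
The paper states this corollary without proof, so your plan --- substitute Lemma~\ref{ps} and Lemma~\ref{Cs} into Equation~\eqref{eq b1} and bound the twenty-eight summands term by term --- is the natural (indeed the only) route, and your individual estimates are all correct: $p_1,p_2,p_3,p_4,p_7=\Theta(n^{-3})$, $p_5,p_6=\Theta(n^{-2})$, the degree-$4$ pairs are exactly the seven you list, and together they contribute $\Theta\bigl((\ks+\kt)^4/n^6\bigr)$ with every monomial $\ks^4,\ks^3\kt,\dots,\kt^4$ represented.

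The problem is the step you yourself flag as delicate and then skip. The four surviving mixed terms contribute $C_{1,6}p_1p_6+C_{2,5}p_2p_5+C_{2,6}p_2p_6+C_{3,5}p_3p_5=\Theta\bigl((\ks^3+\ks^2\kt+\ks\kt^2)/n^5\bigr)=\Theta\bigl(\ks(\ks+\kt)^2/n^5\bigr)$, and absorbing this into $O\bigl((\ks+\kt)^4/n^6\bigr)$ is equivalent to the inequality $\ks n=O\bigl((\ks+\kt)^2\bigr)$. That inequality fails precisely in the paper's regime of interest $\ks,\kt=o(n)$: with $\ks=\kt=\sqrt n$ your computation gives $b_1=\Theta(n^{-7/2})$ while the claimed bound is $\Theta(n^{-4})$, and already for fixed $\ks\ge 3$ the $(1,6)$ term alone is $\Theta(n^{-5})$ against a claimed $\Theta(n^{-6})$. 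So ``once these terms are absorbed'' asserts exactly what cannot be proved; the honest conclusion of your own analysis is $b_1=\Theta\bigl((\ks+\kt)^4/n^6+\ks(\ks+\kt)^2/n^5\bigr)$, i.e.\ the corollary needs either the extra $n^{-5}$ term or a hypothesis such as $(\ks+\kt)^2=\Omega(\ks n)$. (This does not sink the paper's final conclusion, since the $n^{-5}$ contributions are still $o(\lambda)$, but it is a genuine correction to the statement, not a detail a proof can wave through.)
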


The expressions for $p_{\alpha\beta}$ are no more difficult to calculate, although care must be taken to account for all possible symmetries.  Since the values for $p_\alpha$, $\alpha \in I$ have already been specified, we instead focus on calculating $p_{\beta | \alpha}:=\e(Y_\beta | Y_\alpha)$, since $p_{\alpha\beta} = p_\alpha\, p_{\beta | \alpha}$.

To demonstrate, we fix some $\{a,b,c\} = \alpha \in \Gamma_1$, then we consider $\beta \in \Gamma_1$, $\beta \neq \alpha$.  
There is only one case to consider; that is, when $\beta=\{a,b,d\}$.  This scenario can be described pictorially as
\[ \xymatrix{ & \bullet \ar@{.}[dr]_c \ar@{.}[dl]^b \ar@/^1pc/@{.}[dr]^d & \\ \bullet \ar@{.}[rr]_a & & \bullet }  \]
In this case, $Y_\alpha$ and $Y_\beta$ are dependent since knowing that edges $a$ and $b$ are already in a triangular formation affects the probability that $a$, $b$, and $d$ are in a triangular formation.  In fact, in this case we have $p_{\beta|\alpha} = \binom{n}{2}^{-1}$, since there is only one location allowed for edge $d$, i.e., it must coincide with edge $c$.  

Also, the terms $C_{2,2}$ and $C_{3,3}$ are the sum of two distinct forms of overlapping, and each has a different corresponding conditional probability.  Thus we subdivide these terms into $C_{2,2} = C_{2,2}^1 + C_{2,2}^2$ and $C_{3,3} = C_{3,3}^1 + C_{3,3}^2$.  
\begin{lemma}\label{Cp}
Let $C_{2,2}^1$ denote the number of indices $\alpha, \beta \in \Gamma_2$ which do \emph{not} share a triangle.  Let $C_{2,2}^2$ denote the number of indices $\alpha, \beta \in \Gamma_2$ which \emph{do} share a triangle.  Let $C_{3,3}^1$ denote the number of indices $\alpha, \beta \in \Gamma_3$ which share both triangles.  Let $C_{3,3}^2$ denote the number of indices $\alpha, \beta \in \Gamma_3$ which share a triangle and a single edge.  Then we have
\[
\begin{array}{llll}
C_{2,2}^1 & := \binom{\ks}{2}\binom{\kt}{2}, & p_{2|2}^1 & = \binom{n}{3}^{-1}, \\
C_{2,2}^2 & := \binom{\ks}{3}\binom{3}{1}\kt, & p_{2|2}^2 & = \binom{n}{2}^{-1}, \\
C_{3,3}^1 & := \binom{\kt}{2}\binom{\ks}{2}, & p_{3|3}^1 & = \binom{n}{2}^{-1}, \\
C_{3,3}^2 & := \ks \binom{\kt}{3}\binom{3}{1}, & p_{3|3}^2 & = \binom{n}{3}^{-1}.
\end{array}
\]
\end{lemma}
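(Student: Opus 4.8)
I would prove Lemma~\ref{Cp} by the same hands-on counting used for Lemmas~\ref{ps} and~\ref{Cs}, in two parts: first refine the overlap counts $C_{2,2}$ and $C_{3,3}$ according to the \emph{type} of intersection of the two indices, then evaluate $p_{\beta|\alpha}=\e(Y_\beta\mid Y_\alpha=1)$ for each type. An index in $\Gamma_2$ is a pair of single edges together with a triangle, so two distinct dependent indices $\alpha,\beta\in\Gamma_2$ (i.e.\ $\beta\in B_\alpha$) must agree on at least two of these three objects, and there are exactly two ways this can occur: they share both single edges and differ in the triangle, or they share the triangle and exactly one single edge. Counting the first (choose the shared pair of single edges, then the two triangles) gives $C_{2,2}^1=\binom{\ks}{2}\binom{\kt}{2}$, and the second (choose the common triangle, then the three single edges involved, then which one is shared) gives $C_{2,2}^2=\binom{\ks}{3}\binom{3}{1}\kt$. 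The parallel dichotomy for $\Gamma_3$, whose indices carry one single edge and two triangles, is ``share both triangles, differ in the single edge'' versus ``share one triangle and the single edge, differ in the other triangle''; these give $C_{3,3}^1=\binom{\kt}{2}\binom{\ks}{2}$ and $C_{3,3}^2=\ks\binom{\kt}{3}\binom{3}{1}$. In each case the two pieces should sum to the value of $C_{2,2}$ (resp.\ $C_{3,3}$) already found in Lemma~\ref{Cs}, which serves as a check.

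For the conditional probabilities I would argue exactly as in the worked $\Gamma_1$--$\Gamma_1$ computation displayed just above the lemma. In $\mathcal M(n,\ks,\kt)$ the single edges and the triangles are placed independently and uniformly, so conditioning on $Y_\alpha=1$ fixes nothing about $\beta$ beyond the locations of the objects $\alpha$ and $\beta$ have in common, which are pinned into a specific bad-event configuration, while the one remaining object of $\beta$ stays uniform and independent of everything conditioned on. One then locates the placement of that remaining object that completes the bad event indexed by $\beta$: when the free object is a triangle, namely for $p_{2|2}^1$ and $p_{3|3}^2$, it is forced onto a designated triple, giving $\binom{n}{3}^{-1}$, and when it is a single edge, namely for $p_{2|2}^2$ and $p_{3|3}^1$, it is forced onto a designated pair of vertices, giving $\binom{n}{2}^{-1}$. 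Drawing the four configurations in the style of Figure~\ref{bad events} is the clearest way to organize the case analysis.

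The obstacle is entirely in the bookkeeping rather than in any idea. One has to be sure the two-way split of overlap types for $\Gamma_2$ and for $\Gamma_3$ is exhaustive and disjoint, so that the refined counts really do sum back to the $C_{2,2}$ and $C_{3,3}$ of Lemma~\ref{Cs}; and, more delicately, one has to verify that in each of the four conditional computations the configuration left after conditioning is rigid enough that $\beta$'s free object has a \emph{unique} completing placement, since this is exactly what pins each conditional probability to a bare reciprocal binomial coefficient, and it is where the labelling conventions implicit in the definition of the $\Gamma_i$ carry the weight. Once those points are settled, the stated values follow at once from the independence and uniformity of placements in $\mathcal M(n,\ks,\kt)$.
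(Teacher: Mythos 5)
Your overall strategy---refining $C_{2,2}$ and $C_{3,3}$ by the type of overlap and then computing each $p_{\beta|\alpha}$ by locating the admissible placements of $\beta$'s one free object---is exactly the route the paper takes (it gives no explicit proof of this lemma beyond the worked $\Gamma_1$--$\Gamma_1$ template), and your refined counts $C_{2,2}^1$, $C_{2,2}^2$, $C_{3,3}^1$, $C_{3,3}^2$ are correct: the two overlap types are exhaustive and disjoint, and the pieces sum to the entries of Lemma~\ref{Cs} as your consistency check requires.

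The gap sits precisely at the point you flag as delicate: the assertion that the free object has a \emph{unique} completing placement fails in the two cases where that object is a triangle. Take $p_{2|2}^1$: conditioning on $Y_\alpha=1$ pins the two shared single edges into a path $x$--$w$--$y$ with the first triangle on $\{x,y,v\}$; for $Y_\beta=1$ the second triangle need only \emph{contain} the closing edge $\{x,y\}$, and its third vertex ranges freely over the remaining $n-3$ nodes. A direct computation bears this out: $p_{\alpha\beta}=\frac{2(n-2)}{\binom{n}{2}}\bigl(\frac{n-3}{\binom{n}{3}}\bigr)^2$ while $p_\alpha=\frac{2(n-2)}{\binom{n}{2}}\cdot\frac{n-3}{\binom{n}{3}}$, so $p_{2|2}^1=(n-3)\binom{n}{3}^{-1}=\Theta(n^{-2})$ rather than $\binom{n}{3}^{-1}=\Theta(n^{-3})$. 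The same issue arises for $p_{3|3}^2$, where the free triangle must merely contain one of two designated edges through the shared single edge's endpoint, again giving $\Theta(n)$ placements. Even when the free object is a single edge the placement is not unique: in $p_{2|2}^2$ the free edge can close a path through either of the two triangle edges meeting the shared edge's endpoint (two placements), and in $p_{3|3}^1$ it can join any of the $2\times 2$ pairs of edges through the shared vertex of the two triangles (four placements). The latter two only perturb constants, but the triangle cases change the order of the terms $C_{2,2}^1\,p_2\,p_{2|2}^1$ and $C_{3,3}^2\,p_3\,p_{3|3}^2$ in $b_2$ by a factor of $n$. Since your values reproduce the lemma as printed, either there is an unstated restriction on the events $Y_\beta$ that rigidifies the configuration, or the stated conditional probabilities are missing these factors; in either case the uniqueness of the completing placement is exactly the claim your proof must establish, and as written the analogy with the $\Gamma_1$--$\Gamma_1$ case does not carry it.
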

\ignore{
denote by $\Gamma_{1,1}^2 \subset I \times I$ the set of all ordered pairs $\alpha, \beta\in \Gamma_1$ such that $\alpha$ and $\beta$ share exactly two edges.

The case when one edge is shared can be viewed pictorially as
\[ \xymatrix{ & \bullet \ar@{.}[dr]_a \ar@{.}[dl]^b \ar@{.}[r]^d & \bullet \ar@{.}[d]^e \\ \bullet \ar@{.}[rr]_c & & \bullet }   \]
and the value $p_{\alpha\beta}$ in this case is given by
\[ p_{\alpha\beta} = \frac{\binom{n}{4} \binom{6}{5} 5}{\binom{n}{2}^5}, \qquad (\alpha,\beta) \in \Gamma_{1,1}^1. \]
The case when two are shared can be viewed as
\[ \xymatrix{ & \bullet \ar@{.}[dr]_a \ar@{.}[dl]^b \ar@/^1pc/@{.}[dr]^d & \\ \bullet \ar@{.}[rr]_c & & \bullet }  \]

and the value $p_{\alpha\beta}$ is given by
\[ p_{\alpha\beta} = \frac{\binom{n}{3} \binom{3}{1}}{\binom{n}{2}^4}, \qquad (\alpha,\beta)\in\Gamma_{1,1}^2. \]
When $\beta \in \Gamma_4$, then we have $p_{\alpha\beta} = p_\alpha p_\beta$ since the sets are not dependent.  This decreases the number of cases to consider, but only by a few, as the next proposition demonstrates.  
\begin{prop}
We have $p_{\alpha\beta} = p_{\alpha}p_{\beta}$ for $\alpha\in \Gamma_i$ and $\beta\in \Gamma_j$ for pairs  $\{i,j\}$ equal to $\{1,4\}, \{1,7\}, \{4,6\}, \{6,7\}$.
\end{prop}

Despite the many different cases to consider, they all follow a basic pattern, which is for a given multi--graph $G$ on $v(G)$ nodes, with $k_s(G)$ single edges and $k_t(G)$ triangles, we have
\begin{equation} \label{pG}
 p_{G} = \frac{\binom{n}{v(G)}c_{G}}{\binom{n}{2}^{k_s(G)} \binom{n}{3}^{k_t(G)}} = O\left( n^{v(G)-2k_s(G)-3k_t(G)}\right),
 \end{equation}
where $c_G$ denotes a nonnegative integer-valued constant which counts the number of equivalence classes for the given configuration.  Of the many cases, all share in the estimate $p_G = O(k_s^? k_t^? / n^?)$

Thus, in our context, for each $(\alpha, \beta) \in I \times I$, we construct a graph $G_{\alpha\beta}$ consisting of all possible combinations of single edges and triangles where at least one single edge or triangle is shared, but not all of them.  Thus, depending on the overlap, we need to further break up the set $I \times I$ into various smaller components as we did with $\Gamma_{1,1}^1\subset I \times I$ and $\Gamma_{1,1}^2 \subset I \times I$.  Then, within each of these sets, the probabilities $p_{\alpha\beta}$ are all the same, and we can revert back to counting arguments.  
}

\begin{lemma}
We have 
\[ p_{1|1} = \binom{n}{2}^{-1}, \]
\[ p_{2|1} = p_{3|2} = p_{4|3} = p_{4|4} = \binom{n}{3}^{-1}, \]
and the rest are either specified in Lemma~\ref{Cp} or are 0.
\end{lemma}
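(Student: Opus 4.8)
The quantities $p_{\beta|\alpha}=\e(Y_\beta\mid Y_\alpha)$ are the last piece of data needed to evaluate $b_2$ through \eqref{b2}, since $p_{\alpha\beta}=p_\alpha\,p_{\beta|\alpha}$ and the $p_\alpha$ are already supplied by Lemma~\ref{ps}. The plan is therefore to run through the dependent pair types $(\Gamma_i,\Gamma_j)$ one at a time, in the style of the computation of $p_{1|1}$ displayed above. Throughout I think of each $\Gamma_i$ as a family of motifs assembled from a fixed number (two or three) of \emph{tokens}, each token being a single edge or a triangle of $\mathcal{M}(n,\ks,\kt)$, with $Y_\alpha=1$ iff the tokens of $\alpha$ are placed so as to realise the motif on the correct number of distinct nodes; by the discussion preceding Lemma~\ref{ps}, $Y_\alpha$ and $Y_\beta$ are dependent iff $\alpha$ and $\beta$ share at least two tokens, so in any dependent pair $\beta$ has at most one token outside $\alpha$.

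There are three regimes. First, the five cases named in the statement are exactly those with $|\beta|=3$ and a single fresh token $\tau$: in $p_{1|1}$ the token $\tau$ is a single edge, while in $p_{2|1},p_{3|2},p_{4|3},p_{4|4}$ it is a triangle. For these I would argue that conditioning on $Y_\alpha=1$ fixes the two shared tokens up to the global placement of $\alpha$'s motif and, crucially, fixes which vertices they have in common; the type-$j$ motif is then rigid enough that, once its two already-placed tokens are pinned, there is a unique location at which $\tau$ completes it (for $p_{1|1}$ the unique edge closing the triangle on the two free endpoints; in the triangle cases the unique triple carrying the edge of the ``extra'' triangle dictated by the placed tokens). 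Since $\tau$ is placed independently and uniformly, $\P(Y_\beta=1\mid Y_\alpha=1)$ equals $\binom{n}{2}^{-1}$ when $\tau$ is a single edge and $\binom{n}{3}^{-1}$ when $\tau$ is a triangle. Second, the pairs $(\Gamma_2,\Gamma_2)$ and $(\Gamma_3,\Gamma_3)$ are the ones in which the two shared tokens can be of two different kinds (two single edges versus a single edge together with a triangle; resp.\ two triangles versus a triangle together with a single edge), and the two resulting conditionals are already recorded in Lemma~\ref{Cp}, obtained by the same rigidity argument applied in each sub-case. Third, for every remaining dependent pair $p_{\beta|\alpha}=0$: there $\beta$ has all of its tokens inside $\alpha$, and the type-$j$ motif forces a geometric coincidence --- a repeated edge ($j=6$), a single edge lying on a triangle edge ($j=5$), or a repeated triangle ($j=7$) --- which the type-$i$ motif of $\alpha$ explicitly rules out, so $\{Y_\alpha=1\}\cap\{Y_\beta=1\}=\varnothing$; listing these gives the pairs $(1,6),(2,5),(2,6),(3,5),(3,7),(4,7)$, matching the zero entries already present in Lemma~\ref{Cs}.

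The step I expect to be the real work is the rigidity claim in the mixed cases $p_{2|1}$ and $p_{3|2}$ (and, to a lesser extent, $p_{4|3}$), where the two shared tokens have different types: one must match each vertex of a shared triangle to its role in the type-$j$ motif, verify that this role assignment is forced by the positions already determined through $Y_\alpha=1$, and then discard the placements of $\tau$ that would either repeat a node --- so that the completed configuration has too few distinct vertices and hence is not the motif in question --- or realise a different bad event instead; what has to be checked is that exactly one admissible placement survives. The symmetries of each motif (which vertex of a shared triangle, or which of two shared single edges, plays which role) must be enumerated, but they collapse once $Y_\alpha=1$ is imposed, and making that collapse precise is the content of the case analysis. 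Once all pairs $(\Gamma_i,\Gamma_j)$ have been treated and cross-checked against Lemma~\ref{Cs}, assembling $p_{\alpha\beta}=p_\alpha\,p_{\beta|\alpha}$ delivers every term of $b_2$.
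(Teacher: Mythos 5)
Your overall strategy --- reduce to $p_{\beta\mid\alpha}$ via $p_{\alpha\beta}=p_\alpha p_{\beta\mid\alpha}$, classify each dependent pair by its single fresh token, delegate the $(\Gamma_2,\Gamma_2)$ and $(\Gamma_3,\Gamma_3)$ sub-cases to Lemma~\ref{Cp}, and kill the remaining pairs $(1,6),(2,5),(2,6),(3,5),(3,7),(4,7)$ by geometric incompatibility --- is exactly the route the paper takes (the paper only works the $p_{1|1}$ case explicitly and asserts the rest follow similarly), and your handling of $p_{1|1}$, of the Lemma~\ref{Cp} sub-cases, and of the vanishing pairs is sound. The gap is precisely the rigidity claim you flag as ``the real work'' and then assert rather than verify: when the fresh token $\tau$ is a \emph{triangle}, its placement is \emph{not} unique. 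Conditioning on $Y_\alpha=1$ pins down only one \emph{edge} that $\tau$ must contain, and a triple of nodes containing a prescribed pair can be completed by any of roughly $n$ third vertices. Concretely, for $p_{2|1}$: given that $a,b,c$ form a triangle on $\{i,j,k\}$ with $a,b$ meeting at $j$, the type-$2$ motif on $(\{a,b\},\{u\})$ requires only that $u$ occupy a triple of the form $\{i,k,v\}$ with $v\notin\{i,j,k\}$, so the conditional probability is $(n-3)\binom{n}{3}^{-1}$, not $\binom{n}{3}^{-1}$. The same phenomenon occurs for $p_{3|2}$, $p_{4|3}$ and $p_{4|4}$, where the fresh triangle must contain one of a small number of prescribed edges but is otherwise free in its third vertex, producing conditional probabilities of order $n\cdot\binom{n}{3}^{-1}=\Theta(n^{-2})$. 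Only in the $p_{1|1}$ case, where $\tau$ is a single edge, is the completing location genuinely unique.

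So the step ``exactly one admissible placement survives'' fails in four of the five named cases, and the argument as written does not establish the displayed values; carried out carefully it yields answers larger by a factor of $\Theta(n)$, which would in turn raise the corresponding contributions $C_{i,j}\,p_i\,p_{j\mid i}$ to $b_2$ in \eqref{b2} by the same factor. You need either to identify an additional constraint in the motif definitions that actually forces the third vertex of the fresh triangle (I do not see one in the figures or in the definitions of $X_2,X_3,X_4$), or to redo these four conditional probabilities with the $\Theta(n)$ count of admissible triples and propagate the change through the bound on $b_2$.
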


In order to more easily state the bound for $b_2$, we make a final definition, which is for a collection of constants $C_{i,j}^\ast$, $i,j=1,\ldots, 7$, $i\leq j$.  
\begin{defi}
We define
\[ C_{2,2}^\ast := C_{2,2}^1 p_{2}p_{2|2}^1 + C_{2,2}^2 p_2 p_{2|2}^2, \]
\[ C_{3,3}^\ast := C_{3,3}^1 p_{3}p_{3|3}^1 + C_{3,3}^2 p_3 p_{3|3}^2. \]

For $i\leq j$, $i,j=1,\ldots, 7$, excluding the cases $i=j=2$ and $i=j=3$, we define
\[C_{i,j}^\ast := C_{i,j} p_i p_{j | i}.\]
\end{defi}

\begin{prop}
We have
\[ b_2 = \sum_{i= 1}^7 \sum_{j=i}^7 C_{i,j}^\ast  \sim O\left( \frac{(\ks+\kt)^4}{n^6}\right).
 \]
\end{prop}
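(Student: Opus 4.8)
My plan is to split the argument into two parts: establishing the closed-form identity $b_2=\sum_{i\leq j}C_{i,j}^\ast$, and then reading off its asymptotic order.

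For the identity I would start from the definition $b_2=\sum_{\alpha\in I}\sum_{\alpha\neq\beta\in B_\alpha}p_{\alpha\beta}$ in Equation~\eqref{b2} and partition the pairs $(\alpha,\beta)$ according to the blocks $(i,j)$ with $\alpha\in\Gamma_i$ and $\beta\in\Gamma_j$; since $p_{\alpha\beta}=p_{\beta\alpha}$ and the counts in Lemmas~\ref{Cs} and~\ref{Cp} are taken relative to an ordered ``first'' coordinate, it suffices to sum over $i\leq j$. On each block $p_\alpha$ equals the constant $p_i$ of Lemma~\ref{ps}, while $p_{\beta\mid\alpha}$ depends only on the combinatorial type of the overlap between $\alpha$ and $\beta$. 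For every block other than $(2,2)$ and $(3,3)$ there is a single overlap type, with $p_{\beta\mid\alpha}=p_{j\mid i}$ as tabulated in Lemma~\ref{Cp} and the lemma following it; for $(2,2)$ and $(3,3)$ there are exactly two overlap types (sharing a triangle or not; sharing both triangles or a triangle and a single edge), which is precisely why those counts were split as $C_{2,2}^1+C_{2,2}^2$ and $C_{3,3}^1+C_{3,3}^2$ with the two conditional probabilities recorded in Lemma~\ref{Cp}. Substituting $p_{\alpha\beta}=p_\alpha\,p_{\beta\mid\alpha}$ shows that block $(i,j)$ contributes exactly $C_{i,j}^\ast$, and summing over $i\leq j$ gives the identity.

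For the asymptotics I would go block by block. By Lemmas~\ref{Cs} and~\ref{Cp}, each $C_{i,j}$ (and each $C_{i,j}^1,C_{i,j}^2$) is a fixed polynomial in $\ks,\kt$ every monomial of which has total degree at most $4$, so $C_{i,j}=O\big((\ks+\kt)^4\big)$. Each factor $p_i$ (Lemma~\ref{ps}) and $p_{j\mid i}$ is a ratio of products of binomial coefficients $\binom{n}{r}$, so matching degrees of numerator and denominator gives $p_i=\Theta(n^{-a_i})$ and $p_{j\mid i}=\Theta(n^{-c_{ij}})$ with explicit integer exponents; hence $C_{i,j}^\ast=\Theta\big((\ks+\kt)^{d_{ij}}\,n^{-(a_i+c_{ij})}\big)$ with $d_{ij}\leq 4$, and the statement follows by taking the maximum order over the finitely many blocks and absorbing the remainder into the $O(\cdot)$. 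This mirrors the computation already carried out for $b_1$ in the corollary following Equation~\eqref{eq b1}, the dominant contributions coming from the degree-four blocks $(1,1),(1,2),(2,2),(2,3),(3,3),(3,4),(4,4)$.

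The only genuine difficulty is the bookkeeping, and it is error-prone. One must have correctly characterized the dependency neighborhoods $B_\alpha$, so that no block is omitted and none is included spuriously (for instance, $\Gamma_1$--$\Gamma_4$ pairs are always independent); one must separate the two overlap types inside the $(2,2)$ and $(3,3)$ blocks with the correct multiplicities; and, most delicately, one must use the right normalizing factor in each conditional probability --- $\binom{n}{2}^{-1}$ versus $\binom{n}{3}^{-1}$, that is, a factor $n^{-2}$ versus $n^{-3}$ --- since that single choice fixes the power of $n$ in the block, and hence whether the block sits at the claimed order.
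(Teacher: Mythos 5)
The paper states this proposition without any proof, so the only thing to compare against is the chain of surrounding lemmas; your derivation of the identity $b_2=\sum_{i\le j}C_{i,j}^\ast$ --- partitioning the sum in Equation~\eqref{b2} into blocks $\Gamma_i\times\Gamma_j$, writing $p_{\alpha\beta}=p_\alpha\,p_{\beta\mid\alpha}$, and splitting the $(2,2)$ and $(3,3)$ blocks by overlap type --- is exactly the intended argument, and that half is fine.

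The gap is in the asymptotic half, and it sits precisely at the step you defer (``the statement follows by taking the maximum order over the finitely many blocks''). You correctly identify that the choice between $\binom{n}{2}^{-1}$ and $\binom{n}{3}^{-1}$ in $p_{j\mid i}$ decides whether a block meets the claimed order, but you never run the check; if you run it with the paper's own tabulated values, it fails. For the $(1,1)$ block, $C_{1,1}=\binom{\ks}{4}\binom{4}{2}=\Theta(\ks^4)$, $p_1=\binom{n}{3}/\binom{n}{2}^3=\Theta(n^{-3})$, and $p_{1\mid 1}=\binom{n}{2}^{-1}=\Theta(n^{-2})$, so $C_{1,1}^\ast=\Theta(\ks^4/n^5)$, which is not $O\bigl((\ks+\kt)^4/n^6\bigr)$. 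The same happens for the $C_{2,2}^2$ and $C_{3,3}^1$ pieces, whose conditional probability is $\binom{n}{2}^{-1}$, giving $\Theta(\ks^3\kt/n^5)$ and $\Theta(\ks^2\kt^2/n^5)$; and the lower-degree blocks involving $\Gamma_5$ and $\Gamma_6$ produce terms such as $\Theta(\ks^3/n^5)$ that are likewise not dominated by $(\ks+\kt)^4/n^6$ unless $n=O(\ks+\kt)$. So either several of the conditional probabilities you are importing are off by a factor of $n$, or the exponent in the statement should be $n^{-5}$; as written, your outline cannot close, because the maximum over blocks is $\Theta(\cdot/n^5)$, not $\Theta(\cdot/n^6)$. (The qualitative conclusion survives --- for $\ks,\kt=o(n)$ one still gets $b_2\to 0$ --- but the stated rate does not follow from the lemmas you invoke, and a correct write-up must either resolve this discrepancy or flag it.)
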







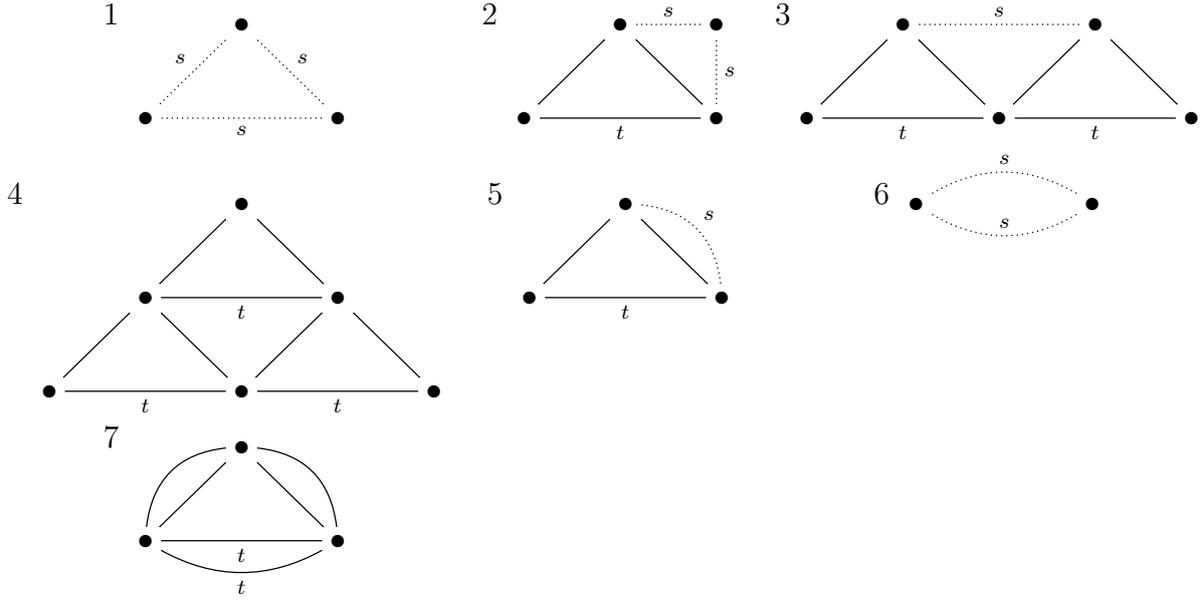
\begin{figure}
\begin{tabular}{ccc}
1 \xymatrix{ & \bullet \ar@{.}[dr]^s \ar@{.}[dl]_s& \\ \bullet \ar@{.}[rr]_s & & \bullet } &
2 \xymatrix{ & \bullet \ar@{-}[dr] \ar@{-}[dl] \ar@{.}[r]^s & \bullet \ar@{.}[d]^s \\ \bullet \ar@{-}[rr]_t & & \bullet }  &
3\xymatrix{ & \bullet \ar@{-}[dr] \ar@{-}[dl] \ar@{.}[rr]^s & & \bullet \ar@{-}[dr] \ar@{-}[dl] \\ \bullet \ar@{-}[rr]_t & & \bullet \ar@{-}[rr]_t & & \bullet }  \\
4 \xymatrix{& & \bullet \ar@{-}[dl] \ar@{-}[dr] \\ & \bullet \ar@{-}[dr] \ar@{-}[dl] \ar@{-}[rr]_t & & \bullet \ar@{-}[dr] \ar@{-}[dl] \\ \bullet \ar@{-}[rr]_t & & \bullet \ar@{-}[rr]_t & & \bullet }  &
5 \xymatrix{ & \bullet \ar@{-}[dr] \ar@{-}[dl] \ar@/^1pc/@{.}[dr]^s & \\ \bullet \ar@{-}[rr]_t & & \bullet }  &
6 \xymatrix{ \bullet \ar@/^1pc/@{.}[rr]^s\ar@/_1pc/@{.}[rr]^s & & \bullet} \\
7 \xymatrix{ & \bullet \ar@{-}[dr] \ar@{-}[dl]  \ar@/^1pc/@{-}[dr] \ar@/_1pc/@{-}[dl]  \\ \bullet \ar@{-}[rr]_t \ar@/_1pc/@{-}[rr]_t & & \bullet } 
\end{tabular}
\caption{Pictoral representation of the bad events in a random graph model}
\label{bad events}
\end{figure}

Next, we calculate the sizes of each set $\Gamma_i$, $i=1,\ldots, 7$. 

\begin{lemma}\label{Gs}
\[
\begin{array}{ll}
|\Gamma_1| & = \binom{k_s}{3},  \\
|\Gamma_2| & = \binom{k_s}{2}k_t, \\
|\Gamma_3| & = k_s \binom{k_t}{2}, \\
|\Gamma_4| & = \binom{k_t}{3}, \\
|\Gamma_5| & = k_s\, k_t, \\
|\Gamma_6| & = \binom{k_s}{2}, \\
|\Gamma_7| & = \binom{k_t}{2}. \\
\end{array}
\]
\end{lemma}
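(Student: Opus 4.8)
The plan is to exploit the fact that each index set $\Gamma_i$ is, by construction, parametrized purely by \emph{which} of the $\ks$ labeled single edges in $K_s$ and \emph{which} of the $\kt$ labeled triangles in $K_t$ participate in a potential type-$i$ bad event. The geometric requirement that the chosen objects actually realize the configuration depicted in Figure~\ref{bad events} is carried by the indicator $Y_\alpha$ and plays no role in the size of $\Gamma_i$. Hence computing $|\Gamma_i|$ amounts to counting unordered subselections from $K_s$ and $K_t$, and, since $K_s$ and $K_t$ are disjoint collections of distinct objects, cross-type distinctness is automatic.

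The key step is to read off, for each $i$, the number $j$ of single edges and the number $\ell$ of triangles occurring in a type-$i$ event, using the explicit definitions of $\Gamma_1,\Gamma_2$ and the analogous definitions of $\Gamma_3,\dots,\Gamma_7$ together with Figure~\ref{bad events}: type $1$ involves three single edges and no triangle, type $2$ two single edges and one triangle, type $3$ one single edge and two triangles, type $4$ three triangles, type $5$ one single edge and one triangle, type $6$ two single edges, and type $7$ two triangles. In each case the single edges play interchangeable roles among themselves, as do the triangles among themselves, so an index of type $i$ is exactly an unordered $j$-subset of $K_s$ paired with an unordered $\ell$-subset of $K_t$. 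Applying the product rule then gives $|\Gamma_i| = \binom{\ks}{j}\binom{\kt}{\ell}$, which yields $\binom{\ks}{3}$, $\binom{\ks}{2}\kt$, $\ks\binom{\kt}{2}$, $\binom{\kt}{3}$, $\ks\kt$, $\binom{\ks}{2}$, and $\binom{\kt}{2}$ respectively, matching the claimed table.

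There is no substantive obstacle; the only point needing a moment's care is the ordered-versus-unordered bookkeeping. One must check in each case whether transposing two chosen single edges (or two chosen triangles) produces the same index, which it does for types $1,3,4,6,7$ and for the edge-pair in type $2$, whereas in type $5$ the two participating objects are of different kinds, so no symmetry reduction applies and one gets the plain product $\ks\kt$ rather than a binomial coefficient. Verifying these factors is precisely what distinguishes the binomial-coefficient entries from the product entries in the statement.
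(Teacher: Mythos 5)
Your proposal is correct and matches the paper's (implicit) argument: the paper states Lemma~\ref{Gs} without proof as a routine count, and your identification of the number of single edges and triangles participating in each configuration type, followed by the product of binomial coefficients over the disjoint labeled sets $K_s$ and $K_t$, is exactly the intended reasoning. Your remark distinguishing the unordered pairs (types $1,2,3,4,6,7$) from the plain product in type $5$ is the only bookkeeping point that needs care, and you handle it correctly.
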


\begin{lemma}\label{lambda}
Let 
\[\lambda_j := \e X_j = |\Gamma_j|\, p_j,  \qquad \ j=1,2,\ldots, 7.\]  
In particular, we have
\[
\begin{array}{ccccccc}
\lambda_1 & = & \frac{\binom{k_s}{3} \binom{n}{3}}{\binom{n}{2}^3} &  \sim &\frac{2}{9}\frac{k_s^3}{n^3} &=& O\left( \frac{k_s^3}{n^3}\right) \\
\lambda_2 & = & \frac{\binom{k_s}{2} k_t \binom{n}{4} \binom{4}{3} \cdot 3}{\binom{n}{3} \binom{n}{2}^2} &\sim & 6\, \frac{k_s^2 k_t}{n^3} & = & O\left(\frac{k_s^2 k_t}{n^3}\right) \\
\lambda_3 & = & \frac{\binom{k_t}{2} k_s \binom{n}{5} \binom{5}{3} \binom{3}{2}}{\binom{n}{3}^2 \binom{n}{2}} &\sim &18\, \frac{k_t^2 k_s}{n^3} & = & O\left(\frac{k_t^2k_s}{n^3}\right) \\
\lambda_4 & = & \frac{\binom{k_t}{3} \binom{n}{6} \binom{6}{3} \binom{3}{2}^2}{\binom{n}{3}^3} & \sim & 9\, \frac{k_t^3}{n^3} & = & O\left( \frac{k_t^3}{n^3} \right) \\
\lambda_5 & = & \frac{k_t k_s \binom{n}{3}\cdot 3}{\binom{n}{3}\binom{n}{2}} & \sim & 6\, \frac{k_t k_s}{n^2} & = & O\left( \frac{k_t k_s}{n^2}\right) \\
\lambda_6 & = & \frac{\binom{k_s}{2} \binom{n}{2}}{\binom{n}{2}^2} & \sim & \frac{k_s^2}{n^2} & = & O\left(\frac{k_s^2}{n^2}\right) \\
\lambda_7 & = & \frac{\binom{k_t}{2} \binom{n}{3}}{\binom{n}{3}^2} & \sim & 3\, \frac{k_t^2}{n^3} & = & O\left(\frac{k_t^2}{n^3}\right).
\end{array}
\]
\end{lemma}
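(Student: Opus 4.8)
The plan is to prove the identity $\lambda_j = \e X_j = |\Gamma_j|\,p_j$ by linearity of expectation, and then to read off the table by substituting the values of $|\Gamma_j|$ and $p_j$ already recorded in Lemmas~\ref{Gs} and~\ref{ps} and simplifying.

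First I would recall that, by the way the bad events are set up, each $X_j$ is a sum of indicators, $X_j = \sum_{\alpha\in\Gamma_j} Y_\alpha$, where $Y_\alpha$ is the indicator that the particular single edges and triangles indexed by $\alpha$ — which in the model $\mathcal{M}(n,k_s,k_t)$ are placed independently and uniformly over the $\binom{n}{2}$ pairs and $\binom{n}{3}$ triples of nodes — are realized in the forbidden configuration of type $j$. Each $Y_\alpha$ is, by construction, a $\{0,1\}$-valued indicator, so $\e Y_\alpha = P(Y_\alpha=1) = p_\alpha$. By linearity, $\e X_j = \sum_{\alpha\in\Gamma_j} p_\alpha$, and because all single edges (resp.\ triangles) of $\mathcal{M}$ are exchangeable, $p_\alpha$ depends only on how many single edges and triangles $\alpha$ contains, hence only on $j$; this common value is precisely the $p_j$ computed in Lemma~\ref{ps}. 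Thus $\e X_j = |\Gamma_j|\,p_j$, which is the claimed identity and the definition of $\lambda_j$.

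Next I would substitute: Lemma~\ref{Gs} supplies $|\Gamma_1| = \binom{k_s}{3}$, $|\Gamma_2| = \binom{k_s}{2}k_t$, and so on, and Lemma~\ref{ps} supplies the matching $p_j$; multiplying pairwise gives the exact middle-column expressions, for instance $\lambda_1 = \binom{k_s}{3}\binom{n}{3}/\binom{n}{2}^3$, and after the obvious cancellations $\lambda_5 = 3k_sk_t/\binom{n}{2}$ and $\lambda_7 = \binom{k_t}{2}/\binom{n}{3}$. For the asymptotics I would apply the elementary estimate $\binom{m}{r} = \frac{m^r}{r!}\bigl(1+O(1/m)\bigr)$ to every binomial coefficient in numerator and denominator, so that $\binom{n}{2}\sim n^2/2$, $\binom{n}{3}\sim n^3/6$, $\binom{n}{4}\sim n^4/24$, $\binom{n}{5}\sim n^5/120$, $\binom{n}{6}\sim n^6/720$, and likewise $\binom{k_s}{r}\sim k_s^r/r!$ and $\binom{k_t}{r}\sim k_t^r/r!$, then collect the leading constant in each row. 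The absolute $O(\cdot)$ bounds, valid for all $n\geq 3$ and $k_s,k_t\geq 0$, follow from $\binom{m}{r}\leq m^r/r!$ in each numerator together with a crude lower bound of the form $\binom{n}{r}\geq c_r n^r$ (for $n\geq 3$) in each denominator, which turns every ratio into an explicit constant times $k_s^a k_t^b n^{-c}$.

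The hard part here is not conceptual but arithmetical: one must carry the small combinatorial factors appearing in $p_j$ — the $\binom{4}{3}\cdot 3$, $\binom{5}{3}\binom{3}{2}$, $\binom{6}{3}\binom{3}{2}^2$, $\binom{3}{2}$, and so forth, which encode the number of labelings of each configuration on a fixed vertex set — correctly through the cancellations, and then verify row by row that the constant surviving as $n,k_s,k_t\to\infty$ agrees with the value stated in the lemma. Beyond this bookkeeping there is no obstacle, since everything rests on the counts already established in Lemmas~\ref{ps} and~\ref{Gs}.
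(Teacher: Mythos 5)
Your proposal is correct and follows essentially the same route as the paper: linearity of expectation over the indicators indexed by $\Gamma_j$, exchangeability giving a common $p_j$ so that $\lambda_j=|\Gamma_j|\,p_j$, and then substitution of the values from Lemmas~\ref{Gs} and~\ref{ps} followed by the standard estimate $\binom{m}{r}\sim m^r/r!$. The leading constants you would obtain (e.g.\ $\tfrac29$, $6$, $18$, $9$, $6$, $1$, $3$) check out against the table, so the only remaining work is the row-by-row bookkeeping you already describe.
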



\begin{lemma} \label{W lemma}
Let $W = \sum_{j=1}^7 X_j$, and define $\lambda := \e W$.  
Suppose $Z$ is an independent Poisson random variable with expected value $\lambda$.  Then 
\[ d_{TV}(\mathcal{L}(W), \mathcal{L}(Z)) \leq  2 (b_1 + b_2), \]
where $b_1$ and $b_2$ are defined by Equation~\eqref{b1} and Equation~\eqref{b2}, respectively.
In addition, we have
\begin{equation}\label{eqW}
 (1-e^{-\lambda}) - d_0 \leq P(W>0) \leq (1-e^{-\lambda}) + d_{0}, 
 \end{equation}
where $d_0 = \min(1,\lambda^{-1})(b_1+b_2)$.  

\end{lemma}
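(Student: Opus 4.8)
The plan is to apply the Chen--Stein bound of \cite{ArratiaGoldstein} (the theorem displayed above) directly, with the index set $I=\bigcup_{i=1}^{7}\Gamma_i$, the Bernoulli variables $Y_\alpha$, $\alpha\in I$, and the dependency neighborhoods $B_\alpha$ introduced earlier in this section. With these choices $W=\sum_{i=1}^{7}X_i=\sum_{\alpha\in I}Y_\alpha$, so $\lambda=\e W=\sum_{i=1}^{7}\lambda_i$ with the $\lambda_i$ of Lemma~\ref{lambda}, and the error terms $b_1,b_2$ are precisely the quantities evaluated above via Lemmas~\ref{ps}, \ref{Cs}, \ref{Cp} and \ref{Gs}. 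Hence the only point that must be verified before invoking the theorem is that its third error term, $b_3$, vanishes.

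To see $b_3=0$, fix $\alpha\in I$; it suffices to show that $Y_\alpha$ is independent of $\sigma(Y_\beta:\beta\notin B_\alpha)$, for then $\e\{Y_\alpha-p_\alpha\mid\sigma(Y_\beta:\beta\notin B_\alpha)\}=0$ almost surely. By construction of $B_\alpha$, an index $\beta\notin B_\alpha$ overlaps $\alpha$ in at most one single edge or triangle. In $\M$ the placements of the $\ks$ single edges and the $\kt$ triangles are mutually independent and uniform over the $\binom{n}{2}$, respectively $\binom{n}{3}$, available positions, and---as observed just before the statement of this lemma---the event $\{Y_\alpha=1\}$ conveys no information about the position of any single one of the edges or triangles making up $\alpha$. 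Since each $Y_\beta$ with $\beta\notin B_\alpha$ is measurable with respect to the placements outside $\alpha$ together with at most one placement inside $\alpha$, conditioning on the whole family $(Y_\beta)_{\beta\notin B_\alpha}$ leaves the law of $Y_\alpha$ unchanged. Thus every summand of $b_3$ is $0$.

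With $b_3=0$ in hand, the theorem of \cite{ArratiaGoldstein} gives at once
\[ d_{TV}(\L(W),\L(Z))\le 2(b_1+b_2+b_3)=2(b_1+b_2) \]
and
\[ |\P(W=0)-e^{-\lambda}|\le \min(1,\lambda^{-1})(b_1+b_2+b_3)=d_0 . \]
Rearranging the second inequality and substituting $\P(W>0)=1-\P(W=0)$ yields $(1-e^{-\lambda})-d_0\le \P(W>0)\le(1-e^{-\lambda})+d_0$, which is \eqref{eqW}. Taking $Z$ independent of $W$ on an enlarged probability space is harmless, since both $d_{TV}(\L(W),\L(Z))$ and $\P(W=0)$ depend only on the marginal law of $W$.

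The real content sits upstream of this lemma: the closed forms for $b_1$ and $b_2$ rest on the enumerations of Lemmas~\ref{ps}--\ref{Gs}, and the vanishing of $b_3$ rests on the claim that the chosen neighborhoods $B_\alpha$ capture every dependence among the $Y_\alpha$. The step I expect to be most error-prone is the bookkeeping behind $b_2$, where one must sort the pairs $\alpha,\beta$ with $\beta\in B_\alpha$ into symmetry classes carrying distinct conditional probabilities (exactly why $C_{2,2}$ and $C_{3,3}$ were split in Lemma~\ref{Cp}); once those counts are granted, the present lemma is immediate.
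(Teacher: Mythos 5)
Your proof is essentially the paper's: Lemma~\ref{W lemma} is obtained there exactly as you describe, by invoking the quoted theorem of \cite{ArratiaGoldstein} with $I=\bigcup_{i=1}^{7}\Gamma_i$ and the neighborhoods $B_\alpha$ already fixed, taking $b_3=0$, and passing from $\P(W=0)$ to $\P(W>0)$ by complementation. The only divergence is that you are more careful than the paper about $b_3$: the paper dismisses it by asserting pairwise independence of $Y_\alpha$ and $Y_\beta$ for $\beta\notin B_\alpha$, while you correctly observe that one needs $Y_\alpha$ independent of the full $\sigma$-field $\sigma(Y_\beta:\beta\notin B_\alpha)$ --- though your argument for that joint independence is still not airtight, since distinct $\beta\notin B_\alpha$ may overlap $\alpha$ in different single elements and jointly pin down the positions of all of $\alpha$'s edges (e.g.\ $\alpha=\{a,b,c\}\in\Gamma_1$ against $\{a,d,e\}$ and $\{b,d,e\}$), a shared weak point of both write-ups rather than a defect of yours alone.
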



\ignore{
\begin{proof}
For $\lambda_1$, this is the expected number of rooks that lie on our board $B$ in precisely the same way in which a triplet of rooks would land to form a triangle.  We know that there are $\binom{n}{3}$ possible triangles, with $\binom{n}{2}^3$ possible ways in total to place three rooks on our board.  Thus, we have
\[ \lambda_1 = \sum_{ i\in \Gamma_1} \e X_i^1 = \sum_{i\in \Gamma_1} \frac{\binom{n}{3}}{\binom{n}{2}^3} = \frac{\binom{k_s}{3} \binom{n}{3}}{\binom{n}{2}^3} \sim \frac{2}{9}\frac{k_s^3}{n^3}. \]
Similarly, for $\lambda_2$ we select any four nodes, then select any three of those four nodes to contain the triangle, and then select one of the three edges of the triangle to connect with the other node.  

The other cases are similar.



\end{proof}
}


\section{Applications}

\subsection{The number of graphs in $\mathcal{G}(n,\ks,\kt)$}

Let us start by comparing the two simpler graph models $\mathcal{G}(n,m)$ and $\mathcal{M}(n,m)$. We slightly abuse notation to let $|\mathcal{G}(n,m)|$ and $|\mathcal{M}(n,m)|$ denote the number of possible graphs in these respective probability spaces. We have
\[ |\mathcal{G}(n,m)| = \binom{\binom{n}{2}}{m}, \qquad |\mathcal{M}(n,m)| = \frac{\binom{n}{2}^m}{m!}. \]
Let $\P_1$ and $\P_2$ denote the probability measures of $\mathcal{G}(n,m)$ and $\mathcal{M}(n,m)$, respectively.  
There exists a coupling of $\P_1$ and $\P_2$ so that the random graph model either generates the same graph in both $\mathcal{G}(n,m)$ and $\mathcal{M}(n,m)$, or generates a graph in $\mathcal{M}(n,m) \setminus \mathcal{G}(n,m)$.  
The total variation distance in this case is simply 
\begin{align*}
d_{TV}(\P_1, \P_2) & =  \P(\text{graph model generated in $\mathcal{M}(n,m) \setminus \mathcal{G}(n,m)$}) \\
  & = 1 - \prod_{i=1}^{m-1} \left(1 - \frac{i}{\binom{n}{2}}\right) \sim 1 - e^{-m^2/n^2},
\end{align*}
whence
\[  d_{TV}(\P_1, \P_2) \to 0 \quad \iff \quad  m = o(n).  \]
Define $d := \prod_{i=1}^{m-1} \left(1 - \frac{i}{\binom{n}{2}}\right)$.  
Let $\mu$ denote the expected number of double edges.  We have $\mu = \frac{m(m-1)}{\binom{n}{2}}$, and so 
for all $n \geq 2$ and $ m \geq 1$, we have
\[ |\mathcal{M}(n,m)|e^{-\mu}(1-e^{\mu}(1-d)) \leq  |\mathcal{G}(n,m)| \leq |\mathcal{M}(n,m)|e^{-\mu}(1+e^{\mu}(1-d)). \]

\ignore{ 
\begin{proof}
This probability is given by 
\[ \frac{|\mathcal{G}(n,m)|}{|\mathcal{M}(n,m)|} = \frac{\binom{\binom{n}{2}}{m}}{\frac{\binom{n}{2}^m}{m!}} = \prod_{i=1}^{m-1} \left(1 - \frac{i}{\binom{n}{2}}\right). \]
This is precisely the same calculation as in the well--known birthday paradox, see for example \cite{Feller1}.  
\end{proof}
}

Now we generalize to $\mathcal{G}(n,\ks, \kt)$ and $\mathcal{M}(n,\ks,\kt)$.  Similar to the previous example, there exists a coupling between $\PG$ and $\PM$ so that the random graph model either generates the same graph in both $\mathcal{G}(n,\ks, \kt)$ and $\mathcal{M}(n,\ks,\kt)$, or generates a graph in $\mathcal{M}(n,\ks,\kt) \setminus \mathcal{G}(n,\ks,\kt)$.  We have
\[ |\mathcal{M}(n,\ks,\kt)| = \frac{\binom{n}{2}^{\ks}}{\ks!}\frac{\binom{n}{3}^{\kt}}{\kt!}. \]
Using Lemma~\ref{W lemma}, we estimate $|\mathcal{G}(n,\ks,\kt)|$ below.  
\begin{prop}\label{counting Gnk}
Let $\lambda$ and $d_0$ be defined as in Lemma~\ref{W lemma}.  Then for all $n \geq 3$, $\ks \geq 0$, $\kt \geq 0$, we have
\begin{equation}\label{counts Gnk}
 \frac{\binom{n}{2}^{\ks}}{\ks!}\frac{\binom{n}{3}^{\kt}}{\kt!} e^{-\lambda} (1 - e^{\lambda} d_0) \leq |\mathcal{G}(n,\ks,\kt)| \leq \frac{\binom{n}{2}^{\ks}}{\ks!}\frac{\binom{n}{3}^{\kt}}{\kt!} e^{-\lambda} (1 + e^{\lambda} d_0). 
 \end{equation}
For $\ks = O(n)$ and $\kt = O(n)$, asymptotically as $n\to\infty$, we have 
 \[ |\mathcal{G}(n,\ks,\kt)| \sim \frac{\binom{n}{2}^{\ks}}{\ks!}\frac{\binom{n}{3}^{\kt}}{\kt!} e^{-\lambda}. \]
\end{prop}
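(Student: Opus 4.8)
The plan is to convert the $\P(W=0)$ estimate of Lemma~\ref{W lemma} into an enumeration bound, with the bridge being the exact identity
\[ \P_{\mathcal{M}}(E) \;=\; \frac{|\mathcal{G}(n,\ks,\kt)|}{|\mathcal{M}(n,\ks,\kt)|}, \qquad\text{where } E = \{X_1 = \cdots = X_7 = 0\} = \{W=0\}. \]
To justify this I would pass to the ordered sample space underlying $\mathcal{M}(n,\ks,\kt)$: placing the $\ks$ edges and $\kt$ triangles one at a time produces $\binom{n}{2}^{\ks}\binom{n}{3}^{\kt}$ equally likely outcomes, and $|\mathcal{M}(n,\ks,\kt)| = \binom{n}{2}^{\ks}\binom{n}{3}^{\kt}/(\ks!\,\kt!)$. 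On the event $E$, the vanishing of $X_5,X_6,X_7$ forces the $\ks$ placed edges and the $\kt$ placed triangles to be pairwise distinct and the resulting edge set to be simple, while the vanishing of $X_1,\ldots,X_4$ forces every triangle of that simple graph to be one of the $\kt$ placed triangles; hence on $E$ the coloring (single edge versus triangle edge) is recovered from the graph itself, and each graph of $\mathcal{G}(n,\ks,\kt)$ arises from exactly $\ks!\,\kt!$ ordered outcomes lying in $E$. This gives $\P_{\mathcal{M}}(E) = \ks!\,\kt!\,|\mathcal{G}(n,\ks,\kt)|/(\binom{n}{2}^{\ks}\binom{n}{3}^{\kt})$, which is the displayed identity; equivalently, it is \eqref{dtv x12} together with the fact that $\P_{\mathcal{M}|E}$ is uniform on $\mathcal{G}(n,\ks,\kt)$.

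Given the identity, the two-sided bound \eqref{counts Gnk} is a one-line deduction: by \eqref{eqW},
\[ \P(W=0) \;=\; 1 - \P(W>0) \;\in\; \big[\, e^{-\lambda}-d_0,\; e^{-\lambda}+d_0\,\big] \;=\; \big[\, e^{-\lambda}(1-e^{\lambda}d_0),\; e^{-\lambda}(1+e^{\lambda}d_0)\,\big], \]
and multiplying through by $|\mathcal{M}(n,\ks,\kt)| = \binom{n}{2}^{\ks}\binom{n}{3}^{\kt}/(\ks!\,\kt!)$ yields \eqref{counts Gnk} for exactly the range of parameters in which Lemma~\ref{W lemma} holds, namely all $n\ge 3$, $\ks\ge 0$, $\kt\ge 0$.

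For the asymptotic refinement I would feed in the orders of magnitude from the corollary to Theorem~\ref{dtv theorem}: if $\ks = O(n)$ and $\kt = O(n)$ then $\lambda = \Theta\!\big(\tfrac{(\ks+\kt)^3+\kt^2}{n^3}+\tfrac{\kt\ks+\ks^2}{n^2}\big) = O(1)$, so $e^{\lambda}$ stays bounded, whereas $d_0 = \Theta\!\big((\ks+\kt)^4/n^6\big) = O(1/n^2) \to 0$; hence $e^{\lambda}d_0 \to 0$, and both sides of \eqref{counts Gnk} are asymptotic to $\binom{n}{2}^{\ks}\binom{n}{3}^{\kt}e^{-\lambda}/(\ks!\,\kt!)$, which is the claimed equivalence. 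Note that this range $\ks,\kt=O(n)$ is slightly larger than the $o(n)$ range in which $d_{TV}(\PG,\PM)\to 0$, because here the $e^{-\lambda}$ factor is displayed explicitly instead of being absorbed into the error.

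The only genuinely delicate step is the counting identity $\P_{\mathcal{M}}(E)=|\mathcal{G}(n,\ks,\kt)|/|\mathcal{M}(n,\ks,\kt)|$: one must check that on $E$ the colored multigraph really is an element of $\mathcal{G}(n,\ks,\kt)$ whose coloring is uniquely determined, so that the de-ordering map is precisely $(\ks!\,\kt!)$-to-one onto $\mathcal{G}(n,\ks,\kt)$. Everything afterwards is a substitution into the already-established bounds of Lemma~\ref{W lemma} and the already-computed magnitudes of $\lambda$ and $d_0$.
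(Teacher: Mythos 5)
Your proposal is correct and follows essentially the same route as the paper: the paper's (one-line) proof also just rearranges Equation~\eqref{eqW} after identifying $\P_{\mathcal{M}}(E)$ with the ratio $|\mathcal{G}(n,\ks,\kt)|/|\mathcal{M}(n,\ks,\kt)|$, which it obtains from the uniformity of the measures and the coupling discussion preceding the proposition. Your careful verification that the de-ordering map is exactly $(\ks!\,\kt!)$-to-one on $E$ is a welcome elaboration of a step the paper leaves implicit, but it is not a different argument.
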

\begin{proof}
Simply rearrange Equation~\eqref{eqW} and note that $\lambda$ stays bounded if and only if $\ks = O(n)$ and $\kt = O(n)$ as $n\to\infty$, and $d_0 \to 0$ for these values of parameters. 
\end{proof}

\subsection{A confidence interval for $C(G)$}

With a bound on the total variation distance between $\PG$ and $\PM$, we can compute a confidence interval for $C(G)$ in $\mathcal{G}(n,\ks,\kt).$ 
Hence, define $C: \OmegaM \to [0,1],$ to be the random variable which specifies the clustering coefficients.  

For each pair of distinct edges $a, b$, where $1 \leq a < b \leq \ks,$ we let
\[ \hat{X}_{a,b} = 1(\text{edges $a$ and $b$ share exactly one node}). \]
The total number of connected triplets, which we denote by $W$, is given by $W = \sum_{a,b} \hat{X}_{a,b}$.  
In fact, the collection of random variables $\{\hat{X}_{a,b}\}_{1 \leq a<b \leq \ks}$ is an i.i.d.~sequence!  
Thus, $W$ is \emph{exactly} binomial with parameters $n = \binom{\ks}{2}$ and $p = \frac{n-2}{\binom{n}{2}}$, hence
\[ \e W = \binom{\ks}{2} \frac{n-2}{\binom{n}{2}}\]
and
\[ \text{Var}(W) = \binom{\ks}{2}\frac{n-2}{\binom{n}{2}}\left(1 - \frac{n-2}{\binom{n}{2}}\right). \]
Let $\lambda = \e W$ and $\sigma = \sqrt{\text{Var}(W)}.$  
For $\ks = O(\sqrt{n})$, $W$ is asymptotically Poisson distributed with parameter $\lambda$, and for $\ks/\sqrt{n} \to\infty$, $W$ is asymptotically normally distributed mean $\lambda$ and variance $\sigma^2$.  

In terms of the clustering coefficient $C$, we have
\[ C = \frac{3 \kt}{3 \kt + \displaystyle  W}. \]
A $(1-\alpha)$ level 2-sided confidence interval for $C$ is given by any numbers $L(C)$ and $U(C)$ that satisfy
\[ \PG(L(C) \leq C \leq U(C)) \geq 1-\alpha. \]
Rearranging, we have 
\[ \PG\left( 3 \kt \left(\frac{1}{U(C)}-1\right) \leq W \leq 3 \kt \left(\frac{1}{L(C)}-1\right) \right) \geq 1-\alpha.  \]

Let us consider the case when $W$ is asymptotically normally distributed, i.e., $\ks / \sqrt{n} \to\infty$.  
Letting $F$ and $\Phi$ denote the distribution functions of $(W-\lambda)$ and a normal random variable with mean 0 and variance $\sigma^2$, respectively, the Berry--Esseen theorem, as improved in~\cite{shevtsova2011absolute}, is given by
\[ \sup_x | F(x) - \Phi(x) |  \leq \frac{0.33554(\rho + 0.415\sigma^3)}{\sigma^3 \sqrt{n}}, \]
where 
\[ \rho = \e |\hat{X}_{a,b}-\e \hat{X}_{a,b}|^3 = \frac{n-2}{\binom{n}{2}}\left(1-\frac{n-2}{\binom{n}{2}}\right)\left( \left(1-\frac{n-2}{\binom{n}{2}}\right)^2 + \left(\frac{n-2}{\binom{n}{2}}\right)^2 \right).
\]

To form a confidence interval for $C(G)$ under measure $\P_{\mathcal{G}}$, we now work backwards from $W$ under measure $\P_{\mathcal{M}}$.  
First, we find a $(1-\beta)$ level 2-sided confidence interval for a random variable $Z$ from the standard normal distribution, where 
\[ \beta = \max(0, \alpha - \sup_x|F(x) - \Phi(x)| - d_{TV}(\PG,\PM)).
\]
Call the lower and upper bounds $L_\beta$ and $U_\beta$, respectively.  
Then we replace random variable $Z$ with random variable $(W-\lambda)/\sigma$, and rearrange to obtain 
\[ \P_{\mathcal{M}}(\sigma L_\beta + \lambda \leq W \leq \sigma U_\beta + \lambda) \geq 1-\alpha, \]
whence, 
\[ L(C) = \left( \frac{\sigma U_\beta + \lambda}{3 \kt} + 1\right)^{-1}, \quad U(C) = \left( \frac{\sigma L_\beta + \lambda}{3 \kt} + 1\right)^{-1}, \]
i.e., $[L(C), U(C)]$ is a $(1-\alpha)$ level confidence interval for $C$ in $\P_{\mathcal{G}}$.  

When $\lambda = O(1)$, $W$ is asymptotically Poisson distributed with parameter $\lambda$, and instead of a 2-sided confidence interval, we compute a 1-sided confidence interval.
A $(1-\alpha)$ level 1-sided confidence interval for $C$ in this case is given by any number $V(C)$ such that
\[ \PG(C \geq V(C)) \leq \alpha. \]
Rearranging, we see that the equivalent formulation in terms of $W$ is 
\[ \PM\left(W \leq \lambda + 3\kt \left(\frac{1}{\sigma V(C)}-1\right)\right) \leq \alpha. \]
Suppose $k_\alpha$ is the largest integer value such that
\[ \PM(W \leq k_\alpha) \leq \alpha. \]
Then we let $\beta = \max(0,\alpha-d_{TV}(\PG,\PM))$, and appeal to a table of Binomial probabilities\footnote{We can alternatively use LeCam's approximation, which states that $d_{TV}(\mathcal{L}(W), \text{Poisson}(\lambda)) \leq n p^2,$ but the resulting inversion of probabilities still requires a table of values.} and rearrange, to obtain
\[ V(C) = \frac{1}{\sigma} \left( \frac{k_\beta + \lambda}{3\kt} + 1\right)^{-1}. \]

\ignore{
We can similarly calculate the $p$--value, which is given by
\[ p = 2 \min( \PG(C \geq C(G)), \PG(C \leq C(G))), \]
where again the probability is uniform over all graphs in $\mathcal{G}(n,\ks,\kt)$.  
}

\bibliographystyle{plain}
\bibliography{triangles}
\end{document}